\newtheorem{theorem}{Theorem}[section]
\newtheorem{lemma}[theorem]{Lemma}
\newtheorem{proposition}[theorem]{Proposition}
\theoremstyle{definition}
\theoremstyle{remark}
\numberwithin{equation}{section}
\def\bfa{{\mathbf a}}
\def\bfp{{\mathbf p}}
\def\bfx{{\mathbf x}}
\def\bfy{{\mathbf y}}
\def\bfz{{\mathbf z}}
\def\calC{{\mathcal C}} 
\def\calD{{\mathcal D}}
\def\calS{{\mathcal S}}
\def\R{{\mathbb R}}
\def\Z{{\mathbb Z}}\def\Q{{\mathbb Q}}
\def\alp{{\alpha}}
\def\del{{\delta}} \def\Del{{\Delta}}
\def\tet{{\theta}}  
\def\kap{{\kappa}}
\def\lam{{\lambda}}
\def\bfome{{\boldsymbol \ome}}
\def\ome{{\omega}} 
\def\d{{\partial}}
\def\eps{\varepsilon}
\def\d{{\,{\rm d}}}
\def\meas{{\rm meas}}
\newcommand{\id}{\mathbf{1}}
\def\bfometil{{\widetilde{\bfome}}}
\newenvironment{blue}{\color{blue}}{}
\DeclareMathOperator{\supp}{supp}
\DeclareMathOperator{\dist}{dist}
\DeclareMathOperator{\Hess}{Hess}
\begin{document}

\author{Damaris Schindler}

\title[Generic ternary diagonal forms]{Diophantine inequalities for generic ternary diagonal forms}

\date{\today}

\maketitle

\begin{abstract}
Let $k\geq 2$ and consider the Diophantine inequality
$$|x_1^k-\alp_2 x_2^k-\alp_3 x_3^k| <\tet.$$
Our goal is to find non-trivial solutions in the variables $x_i$, $1\leq i\leq 3$, all of size about $P$, assuming that $\tet$ is sufficiently large. We study this problem on average over $\alp_3$ and generalize previous work of Bourgain on quadratic ternary diagonal forms to general degree $k$. 

\end{abstract}

\section{Introduction}

In this note we study Diophantine inequalities involving ternary diagonal forms. We fix some degree $k\geq 2$ and let $\alp_2,\alp_3\in \R_{>0}$. Let $\tet >0$ and consider the inequality
\begin{equation}\label{eqn1}
|x_1^k-\alp_2 x_2^k-\alp_3 x_3^k| <\tet.
\end{equation}
The goal is to understand when this inequality has a non-trivial solution if the variables $x_i$ are allowed to range over a box of size $P$ (centered away from the origin in the case $k\geq 3$) and $\tet$ is a function growing with $P$. We will study this problem on average over $\alp_3$ in a bounded interval.\par
In the case $k=2$, Bourgain \cite{BouA16} has shown that one may take $\tet \gg P^{-2/5+\eps}$ for a generic choice of $\alp_3$ (while $\alp_2$ is considered fixed). Assuming the Lindel{\"o}f hypothesis for the Riemann zeta function, he improved this result further to $\tet\gg P^{-1+\eps}$, which is essentially optimal.\par
In the setting of general $k$ one may in an ideal case hope to find a solution to equation (\ref{eqn1}) with $\max_i|x_i|\sim P$, if $\tet \gg P^{k-3}$.\par
Our idea is to combine recent work of Huang \cite{Huang} with the strategy developed by Bourgain \cite{BouA16} (see also work of Blomer, Bourgain, Radziwill and Rudnick \cite{BBRR}) and classical ideas of Titchmarsh \cite{Tit34}, to understand the generalization of the problem to higher degree $k$. Write 
$$f_{\alp_2,\alp_3}(\bfx):= x_1^k-\alp_2 x_2^k-\alp_3 x_3^k.$$
Our main result is the following.

\begin{theorem}\label{thm1}
Let $\alp_2>0$ and $k\geq 3$ be fixed. Then for almost all $\alp_3\in [1/2,1]$ (with respect to the Lebesgue measure) the following statements hold. 
(a) Assuming the Lindel{\"o}f hypothesis for the Riemann zeta function one has
$$\min_{\substack{\bfx\in\Z^3\\ \max_i|x_i|\sim P}}|f_{\alp_2,\alp_3}(\bfx)|\ll P^{k-3+\eps},$$
for any positive $\eps$. Here the implied constant may depend on both $\alp_2$ and $\alp_3$.\\
(b) Unconditionally one has
$$\min_{\substack{\bfx\in\Z^3\\ \max_i|x_i|\sim P}}|f_{\alp_2,\alp_3}(\bfx)|\ll P^{k-12/5+\eps},$$
for any positive $\eps$. Again, the implied constant may depend on both $\alp_2$ and $\alp_3$.
\end{theorem}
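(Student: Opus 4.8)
The plan is to study the inequality \eqref{eqn1} via a smoothed counting function and its mean value over $\alp_3\in[1/2,1]$, following the Bourgain--Titchmarsh circle of ideas but feeding in Huang's bounds on exponential sums (or moments of the relevant Weyl sums / Epstein-type Dirichlet series) to handle general $k$. Concretely, fix a smooth weight $w$ supported on a box at scale $P$ away from the origin, set $\tet$ a small power of $P$ to be optimised, and consider
$$N(\alp_3):=\sum_{\bfx\in\Z^3} w(\bfx/P)\,\Phi\!\left(\frac{f_{\alp_2,\alp_3}(\bfx)}{\tet}\right),$$
where $\Phi$ is a smooth majorant/minorant of the indicator of a bounded interval with compactly supported Fourier transform. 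Applying Fourier inversion in the $\Phi$ variable one writes $N(\alp_3)=\tet\int \widehat\Phi(\tet t)\, S(t;\alp_3)\,\d t$ with $S(t;\alp_3)=\sum_\bfx w(\bfx/P)e(t f_{\alp_2,\alp_3}(\bfx))$ a product of three one-dimensional Weyl-type sums in $x_1^k$, $\alp_2 x_2^k$, $\alp_3 x_3^k$. The main term comes from $|t|$ small (the $t$ near $0$ contribution gives a positive main term of size $\asymp \tet P^{3-k}$, which is positive precisely because $\Phi$ is chosen as a minorant with $\widehat\Phi\ge 0$ near the origin), and everything else must be shown to be an error term for almost all $\alp_3$.

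The next step is the averaging: I would bound $\int_{1/2}^1 |N(\alp_3)-\text{main term}|^2\,\d\alp_3$ (or an $L^1$ variant) by opening the square, executing the $\alp_3$-integration first, and using that $\int_{1/2}^1 e(\alp_3(x_3^k - y_3^k)t)\,\d\alp_3$ is $O(\min(1,|t(x_3^k-y_3^k)|^{-1}))$. This is the mechanism that forces $x_3=y_3$ on the diagonal and gains a saving; the off-diagonal terms are controlled by the decay in $|t(x_3^k-y_3^k)|$. After this one is left with a mean-square expression involving only the $x_1,x_2$ sums together with a factor encoding the $x_3=y_3$ (diagonal) and near-diagonal contributions, and here is where Huang's estimates enter: one needs strong bounds for the relevant exponential sum / the count of $x_1^k - \alp_2 x_2^k$ near a given real number, uniformly over the relevant range of $t$ — on the Lindel\"of hypothesis this gives the near-optimal exponent $k-3$, and unconditionally Huang's work yields the exponent $k-12/5$. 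A Chebyshev / Borel--Cantelli argument over a sequence $P=P_j$ (say $P_j = 2^j$, or a slightly denser sequence, with the smoothing in $\alp_3$ chosen coarser than $P_j^{-\eta}$ so that consecutive scales can be compared) then upgrades the mean-square bound to an almost-everywhere pointwise statement: for a.e.\ $\alp_3$, $N(\alp_3)>0$ for all large $P$ with $\tet=P^{k-3+\eps}$ (resp.\ $P^{k-12/5+\eps}$), which gives a nontrivial integer solution and hence the claimed bound on the minimum.

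Two points require care. First, the passage from the dyadic sequence $P_j$ to all $P$: one must ensure the exceptional set is genuinely null, which is why the variance bound must beat $P_j^{-\delta}$ for some fixed $\delta>0$ after dividing by the square of the main term, and why the interval $[1/2,1]$ and the centered-away-from-origin box are needed (to avoid $x_3=0$ degeneracies and to keep $x_3^k-y_3^k$ genuinely nonzero and of predictable size). Second, and this is what I expect to be the main obstacle, obtaining the error-term bound \emph{uniformly in $t$} across the whole tail $|t|\le \tet^{-1}$: for moderately large $|t|$ the sums $S(t;\alp_3)$ behave like complete Weyl sums and one needs the full strength of the modern bounds (Vinogradov/decoupling-type, as packaged in Huang's paper) for the $x_1,x_2$ variables, while the transition region where $|t|\asymp P^{-k}$ — neither "major arc" nor genuinely equidistributed — is delicate and is exactly where the loss from $k-3$ to $k-12/5$ comes from unconditionally. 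Balancing the diagonal main term, the near-diagonal contribution, and the genuinely oscillatory tail to extract the stated exponents is the crux of the argument.
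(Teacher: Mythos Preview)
Your overall architecture --- smoothed counting function, Fourier inversion, mean-square in $\alp_3$, Borel--Cantelli --- matches the paper, but two structural choices are missing, and without them neither exponent comes out.

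First, the paper does \emph{not} use the additive characters $e(t f(\bfx))$ you propose. It passes to logarithms: on the support of the weight one arranges $x_1^k-\alp_2 x_2^k>0$ and replaces $|f(\bfx)|<\tet$ by $|\log(x_1^k-\alp_2 x_2^k)-k\log x_3-\log\alp_3|<cP^{-k}\tet$. After Fourier inversion in this logarithmic variable the counting function becomes $T^{-1}\int \hat\ome(t/T)F_1(t)F_2(-kt)e^{-it\log\alp_3}\d t$ with
\[
F_1(t)=\sum_{x_1,x_2}\ome_1\ome_2\,(x_1^k-\alp_2 x_2^k)^{it},\qquad F_2(t)=\sum_{x_3}\ome_3\, x_3^{it}.
\]
This is what makes part (a) work: $F_2(t)$ is a smoothed partial sum of $\zeta$, so $F_2(t)\ll P^{1/2}\int(1+|y|^{10})^{-1}|\zeta(\tfrac12+i(y-t))|\d y$, and Lindel\"of gives $F_2(t)\ll P^{1/2+\eps}$. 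In your formulation the $x_3$-sum is a Weyl sum $\sum e(-\alp_3 t x_3^k)$ with no visible connection to $\zeta$, and you give no mechanism through which Lindel\"of could enter. The logarithmic change also makes the $\alp_3$-average clean: since $\alp_3$ appears only through $e^{-it\log\alp_3}$, Parseval collapses $\int_{1/2}^1|S_4(\alp_3)|^2\d\alp_3$ directly to a single $t$-integral of $|F_1(t)|^2|F_2(-kt)|^2$, with none of the $(t,t')$ or $(x_3,y_3)$ cross terms you anticipate.

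Second, you misattribute the unconditional exponent. Huang's theorem on rational points near hypersurfaces is used to bound the mean square $\int|F_1(t)|^2\d t$, via the count of $(y_1,y_2,y_3,y_4)$ with $|(y_1^k-\alp_2 y_2^k)-(y_3^k-\alp_2 y_4^k)|\ll P^k/U$; combined only with the unconditional bound $|\zeta(\tfrac12+it)|\ll|t|^{1/6}$ this yields $k-9/4$, not $k-12/5$. The sharpening to $k-12/5$ requires an additional \emph{pointwise} bound $|F_1(t)|\ll P|t|^{1/3+\eps}$ for $|t|>P^2$, obtained by adapting Titchmarsh's van der Corput treatment of the Epstein zeta function to the binary form $x_1^k-\alp_2 x_2^k$ (Weyl differencing plus a second-derivative test, with a Hessian computation for $\log(x_1^k-\alp_2 x_2^k)$), and then feeding both the $L^2$ and $L^\infty$ information on $F_1$ into Bourgain's large-values formalism. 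None of this is Vinogradov/decoupling, and Huang's input alone does not reach $12/5$.
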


Note that the statement in part (a) of Theorem \ref{thm1} is essentially optimal. Indeed, if the values of the form $x_1^k-\alp_2 x_2^k-\alp_3x_3^k$ were equidistributed, one would expect to find $\sim\tet P^{3-k}$ solutions to (\ref{eqn1}) when the variables $x_i$, $1\leq ~i\leq ~3$, are restricted to a box of size $P$. I.e. in order to expect at least one point on average, one needs that $\tet \gg P^{k-3}$. The methods, that we use, are going to produce this heuristic expectation as a main term and hence seem to be restricted to the case that $\tet \gg P^{k-3}$. Similarly, one may consider the inhomogeneous problem
$$|x_1^k-\alp_2 x_2^k-\alp_3 x_3^k-\xi| <\tet,$$
where $|\xi|\leq P^k$ is fixed, and ask for solutions with $\max_i|x_i|\leq P$. If $\tet<~10^{-2}P^{k-3}$ then this equation has no solution in $\max_i|x_i|\leq P$ for a positive proportion of real numbers $\xi$ with $|\xi|\leq P^k$.\\

We compare Theorem \ref{thm1} with pointwise bounds in $\alp_3$ that can be obtained from work of Beresnevich, Dickinson and Velani \cite{BDV} or alternatively Huang \cite{HuangC}. Indeed, the inequality (\ref{eqn1}) can be reformulated as a problem of finding rational points near the planar curve $\mathcal{C}\subset \R^2$ given by $1-\alp_2y_2^k-\alp_3y_3^k=0$. Then finding solutions to (\ref{eqn1}) can be translated into studying a counting function of the type
$$N_\calC (P,\del) = \sharp\left\{\frac{\bfp}{q}\in \Q^2: 1\leq q\leq P: \dist\left(\frac{\bfp}{q},\calC\right)\ll \del/q\right\},$$
with $\del=\frac{\tet}{P^{k-1}}$. In adjusting the support for $x_1,x_2,x_3$ in (\ref{eqn1}), the nonvanishing curvature condition in \cite{BDV} can be met and then Theorem 6 in \cite{BDV} implies that
$$N_{\calC}(P,\del)\gg \del P^2,$$
if $\del\gg P^{-1}$ and $\del P\rightarrow \infty$. Hence, their results are applicable and lead to a non-trivial solution with $\max_i |x_i|\sim P$ to equation (\ref{eqn1}) as soon as $\tet P^{-k+2}\rightarrow \infty$, or in other words they lead to the conclusion that
$$\min_{\substack{\bfx\in\Z^3\\ \max_i|x_i|\sim P}}|f_{\alp_2,\alp_3}(\bfx)|\ll P^{k-2+o(1)}.$$
Note that the finer asymptotic formula in \cite[Theorem 1]{HuangC} has the same requirement to produce at least one solution to (\ref{eqn1}) with $\max_i|x_i|\sim P$.\par
In contrast to finding one solution to equation (\ref{eqn1}) for fixed parameters $\alp_2$ and $\alp_3$, our strategy is to follow ideas of Bourgain \cite{BouA16} and study the problem on average over $\alp_3$. A mean square argument in the background will then lead to another counting problem in more variables that can be attacked with the results from Diophantine approximation. In fact, recent work of Huang \cite{Huang} is just strong enough for our applications. We will make use of the full strength of his results and note that earlier work of Beresnevich, Vaughan, Velani and Zorin \cite{BVVZ} would lead to a weaker version of our Theorem \ref{thm1}.\par
Moreover, in the unconditional case we will make use of good upper bounds on a Dirichlet series of the shape
$$\sum_{\substack{A_1<x_1<B_1\\ A_2< x_2<B_2}}(x_1^k-\alp_2x_2^k)^{it},$$
where $A_1,A_2,B_1,B_2$ are suitable large parameters (typically of size $P$) and $|t|>P^2$. We will have the freedom to choose the support of $x_1$ and $x_2$ in a way to avoid any singularities in the sum. Then we mimic ideas of Titchmarsh \cite{Tit34} on the Epstein zeta function (see also recent work of Blomer \cite{Blo} where Titchmarsh's ideas are generalized to non-diagonal quadratic forms) for the binary degree $k$ form $x_1^k-\alp_2x_2^k$ in place of a positive definite quadratic form. As in \cite{Tit34}, these bounds are based on a version of van der Corput's method.
\par
Finally we remark that the nearly optimal bound in Theorem \ref{thm1} (a) can be obtained unconditionally if one averages simultaneously over $\alp_2$ and $\alp_3$ in bounded intervals, similarly as done in \cite[Section 5]{BouA16}. We sketch the argument in our situation in the last section of this paper.\\

{\em Acknowledgements:} I would like to thank Zeev Rudnick for his generous support and comments on earlier drafts of this paper. \\

{\bf Notation:} We use the notation $\ll$ and $O$ and $o$ with the usual meaning. The implied constants may depend on the fixed parameter $\alp_2$, but are independent of $\alp_3, P$ and $\tet$ (unless stated otherwise). For $x\in \R$, we write $\Vert x \Vert$ for the minimal distance of $x$ to the nearest integer. 

\section{Reduction to a counting problem}
First we choose suitable weight functions for the variables $x_1,x_2,x_3$. Fix real parameters $0<a_i<b_i$ for $1\leq i\leq 3$. For $1\leq i\leq 3$ let $0\leq \ome_i\leq 1$ be smooth bumpfunctions with $\ome_i=1$ on $[\tfrac{1}{2}a_i,\tfrac{3}{4}b_i]$ and $\supp (\ome_i)\subset [\tfrac{1}{4}a_i,b_i]$. Furthermore, let $0\leq \ome_0\leq 1$ be a smooth bumpfunction with $\ome_0=1$ on $[-1,1]$ and $\supp(\ome_0)\subset [-2,2]$ and $\ome_0(t)=\ome_0(-t)$. Moreover, we assume that
\begin{equation}\label{eqn5}
\begin{split}
& \left(\frac{1}{4}a_1\right)^k - \alp_2b_2^k >0\\
&\left(\frac{1}{4}a_1\right)^k < \alp_2 \left(\frac{1}{4}a_2\right)^k + \frac{1}{2}\left(\frac{1}{4}a_3\right)^k\\
&\alp_2b_2^k+b_3^k <b_1^k.
\end{split}
\end{equation}
Note that we consider $\alp_2$ as fixed. Hence the system of equations (\ref{eqn5}) is soluble, if for example we first choose $a_1$, then $b_2$ small enough and then $a_2<b_2$ and then $a_3,b_3$ and finally $b_1$. Define the weight function
$$\bfome (\bfx):=\prod_{i=1}^3 \ome_i(x_i),$$
and write $\bfx$ for the vector $(x_1,x_2,x_3)$. Let $P$ be a large real parameter. We seek a lower bound for the sum
\begin{equation}\label{eqn12}
\sum_{\bfx\in \Z^3}\bfome\left(\frac{\bfx}{P}\right) \id_{[|x_1^k-\alp_2 x_2^k-\alp_3 x_3^k| <\tet]}.
\end{equation}
If $\tet< P^k$, $\alp_3\in [\tfrac{1}{2},1]$ and $P^{-1}\bfx$ lies in the support of $\bfome$ and (\ref{eqn5}) holds, then the inequality
$$| \log (x_1^k-\alp_2x_2^k)-\log (\alp_3x_3^k)|<\frac{\tet}{P^k}$$
implies that
$$|x_1^k-\alp_2 x_2^k-\alp_3 x_3^k| \ll \tet.$$
Hence it is enough to seek a lower bound for the sum
$$S_1(\alp_3):= \sum_{\bfx\in \Z^3}\bfome\left(\frac{\bfx}{P}\right) \id_{[|\log(x_1^k-\alp_2 x_2^k)-\log(\alp_3 x_3^k)| <c_0P^{-k}\tet]},$$
for a small enough constant $c_0$ (only depending on $\alp_2$ and the support of $\bfome$). Set 
$$T:=\frac{2P^k}{c_0\tet}.$$
We define the exponential sums
$$F_1(t):= \sum_{x_1,x_2\in \Z}\ome_1\left(\frac{x_1}{P}\right)\ome_2\left(\frac{x_2}{P}\right)e^{it\log (x_1^k-\alp_2 x_2^k)},$$
and
$$F_2(t):= \sum_{x_3\in \Z} \ome_3\left(\frac{x_3}{P}\right)e^{it\log x_3}.$$
We set
\begin{equation}\label{eqn17}
S_2(\alp_3):=\frac{1}{T} \int_{\R}\hat{\ome_2}\left(\frac{t}{T}\right) F_1(t)  F_2(-kt) e^{-it\log \alp_3}\d t.
\end{equation}
Then we have $S_1(\alp_3)\geq S_2(\alp_3)$. We split the Fourier transform of $\ome_2$ into two parts 
$$\hat{\ome_2}\left(\frac{t}{T}\right)=\hat{\ome_2}\left(\frac{t}{P^{1/2}}\right)+\left(\hat{\ome}_2\left(\frac{t}{T}\right)-\hat{\ome}_2\left(\frac{t}{P^{1/2}}\right)\right).$$

Define 
\begin{equation}\label{eqn18}
S_3(\alp_3):=\frac{1}{T} \int_{\R}\hat{\ome_2}\left(\frac{t}{P^{1/2}}\right) F_1(t)F_2(-kt) e^{-it\log \alp_3}\d t,
\end{equation}
and
\begin{equation}\label{eqn19}
S_4(\alp_3):=\frac{1}{T} \int_{\R}\left(\hat{\ome}_2\left(\frac{t}{T}\right)-\hat{\ome}_2\left(\frac{t}{P^{1/2}}\right)\right) F_1(t) F_2(-kt) e^{-it\log \alp_3}\d t,
\end{equation}
such that $S_2(\alp_3)=S_3(\alp_3)+S_4(\alp_3)$. We first give a lower bound for $S_3(\alp_3)$. 

\begin{lemma}\label{lem1}
There is a constant $c_2>0$ that only depends on $\alp_2$ and the support of $\bfome$ such that for every $\alp_3\in [1/2,1]$ one has
$$ S_3(\alp_3)\geq c_2 P^{3-k}\tet.$$
\end{lemma}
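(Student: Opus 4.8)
The plan is to compute $S_3(\alp_3)$ in closed form by Fourier inversion and then reduce the claimed bound to a count of lattice points in a box, carried out by fibring over two of the three variables. Expanding $F_1(t)\,F_2(-kt)\,e^{-it\log\alp_3}$ as a triple sum over $\bfx\in\Z^3$ (legitimate since each $\ome_i(\,\cdot\,/P)$ is supported in a bounded range of $x_i$, so the sum is finite) and interchanging with the integral in (\ref{eqn18}) gives
\[
S_3(\alp_3)=\frac1T\sum_{\bfx\in\Z^3}\bfome\left(\frac{\bfx}{P}\right)\int_\R\hat{\ome}_2\left(\frac{t}{P^{1/2}}\right)e^{it g(\bfx)}\d t,\qquad g(\bfx):=\log(x_1^k-\alp_2x_2^k)-\log(\alp_3x_3^k),
\]
where $g$ is well defined on $\supp\bfome$ because the first line of (\ref{eqn5}) forces $x_1^k-\alp_2x_2^k\ge\big((\tfrac14a_1)^k-\alp_2b_2^k\big)P^k>0$ there. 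By Fourier inversion the inner integral equals $P^{1/2}\ome_2\big(P^{1/2}g(\bfx)\big)$, so with $T=2P^k/(c_0\tet)$ one obtains the exact identity
\[
S_3(\alp_3)=\frac{P^{1/2}}{T}\sum_{\bfx\in\Z^3}\bfome\left(\frac{\bfx}{P}\right)\ome_2\big(P^{1/2}g(\bfx)\big)=\frac{c_0\tet}{2P^{k-1/2}}\sum_{\bfx\in\Z^3}\bfome\left(\frac{\bfx}{P}\right)\ome_2\big(P^{1/2}g(\bfx)\big).
\]
Since $\bfome\ge0$ and $\ome_2\ge0$, it then suffices to show that this last sum is $\gg P^{5/2}$, uniformly in $\alp_3\in[\tfrac12,1]$ and for $P$ large. (Should the chosen Fourier normalisation produce $\ome_2\big(-P^{1/2}g(\bfx)\big)$ instead, the argument below applies verbatim after replacing $g$ by $-g$ and interchanging the roles of the last two lines of (\ref{eqn5}).)

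To bound the sum below I would keep only those $\bfx$ with $x_i\in[\tfrac12a_iP,\tfrac34b_iP]$ for $i=1,2,3$ and $P^{1/2}g(\bfx)\in[\tfrac12a_2,\tfrac34b_2]$; since $\ome_i\equiv1$ on $[\tfrac12a_i,\tfrac34b_i]$, each such $\bfx$ contributes exactly $1$. Fibring over $(x_1,x_2)$, the condition on $g$ rewrites as
\[
x_3\in\left(\frac{x_1^k-\alp_2x_2^k}{\alp_3}\right)^{1/k}\big[\,e^{-3b_2/(4k\sqrt P)},\ e^{-a_2/(2k\sqrt P)}\,\big],
\]
an interval of length $\asymp P^{1/2}$ (nondegenerate since $a_2<\tfrac32b_2$, and of this size since $\big((x_1^k-\alp_2x_2^k)/\alp_3\big)^{1/k}\asymp P$). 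Fix a small $\del>0$. If $x_1^k-\alp_2x_2^k\in\big((\tfrac12a_3)^k+\del,\ \tfrac12(\tfrac34b_3)^k-\del\big)P^k$, then, $\alp_3$ being in $[\tfrac12,1]$, the point $\big((x_1^k-\alp_2x_2^k)/\alp_3\big)^{1/k}$ lies at distance $\gg P$ from both ends of $[\tfrac12a_3P,\tfrac34b_3P]$, so for $P$ large the whole $x_3$-interval sits inside $[\tfrac12a_3P,\tfrac34b_3P]$ and contains $\gg P^{1/2}$ integers. The range $\big((\tfrac12a_3)^k,\tfrac12(\tfrac34b_3)^k\big)$ is nonempty since $a_3<b_3$ and $(3/2)^k>2$ for $k\ge2$, so this holds once $\del$ is small. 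Finally, the set of $(u_1,u_2)\in(\tfrac12a_1,\tfrac34b_1)\times(\tfrac12a_2,\tfrac34b_2)$ with $u_1^k-\alp_2u_2^k$ in that range has positive area: the map $(u_1,u_2)\mapsto u_1^k-\alp_2u_2^k$ is smooth with nonvanishing gradient there, and its image $\big((\tfrac12a_1)^k-\alp_2(\tfrac34b_2)^k,\ (\tfrac34b_1)^k-\alp_2(\tfrac12a_2)^k\big)$ overlaps $\big((\tfrac12a_3)^k,\tfrac12(\tfrac34b_3)^k\big)$, the two inequalities needed — namely $(\tfrac12a_1)^k-\alp_2(\tfrac34b_2)^k<\tfrac12(\tfrac34b_3)^k$ and $(\tfrac12a_3)^k+\alp_2(\tfrac12a_2)^k<(\tfrac34b_1)^k$ — being precisely the last two lines of (\ref{eqn5}) after multiplying through by $2^k$, resp.\ $(3/4)^k$, and using $\tfrac12a_i<\tfrac34b_i$. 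Hence there are $\gg P^2$ admissible pairs $(x_1,x_2)$, each with $\gg P^{1/2}$ admissible $x_3$, so $\gg P^{5/2}$ admissible $\bfx$ in total, uniformly in $\alp_3\in[\tfrac12,1]$; combined with the first step this yields $S_3(\alp_3)\ge c_2P^{3-k}\tet$ with $c_2$ depending only on $\alp_2$ and the $a_i,b_i$.

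The one genuinely delicate point is this last step: one must check that (\ref{eqn5}) really does make the $(x_1,x_2)$-region nonempty \emph{and} leaves room $\gg P^{1/2}$ at both ends of the $x_3$-range, uniformly for $\alp_3\in[\tfrac12,1]$ — the $\del$-shrinking above is exactly what absorbs these edge effects, and the two displayed arithmetic inequalities are the only facts about $a_i,b_i$ one actually uses. The Fourier-inversion step is otherwise routine, the only care being the normalisation (equivalently, which sign of $g$ appears in $\ome_2(\pm P^{1/2}g)$), which (\ref{eqn5}) is designed to accommodate either way.
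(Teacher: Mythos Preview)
Your argument is correct and follows essentially the same route as the paper: Fourier inversion converts $S_3(\alp_3)$ into the weighted sum $\tfrac{P^{1/2}}{T}\sum_\bfx \bfome(\bfx/P)\,\ome_2(P^{1/2}g(\bfx))$, and one then bounds this below by a direct lattice-point count on the sub-box where all the weights equal $1$.

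The one genuine difference is the choice of fibration. You fibre over $(x_1,x_2)$ and count $x_3$, which forces you to verify that the image of $(u_1,u_2)\mapsto u_1^k-\alp_2u_2^k$ overlaps the target interval for $x_3^k$ and to introduce the $\del$-shrinking to keep the $x_3$-interval inside $[\tfrac12a_3P,\tfrac34b_3P]$ uniformly in $\alp_3$. The paper instead fibres over $(x_2,x_3)$ and counts $x_1$; this is slightly cleaner because the second and third lines of (\ref{eqn5}) say \emph{precisely} that $\alp_2x_2^k+\alp_3x_3^k\in\big((\tfrac14a_1)^k,b_1^k\big)P^k$ for every $(x_2,x_3)$ in the full support boxes and every $\alp_3\in[\tfrac12,1]$, so one gets $\gg P^{1/2}$ admissible $x_1$ for each such pair without any overlap or edge analysis. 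Your route works just as well but has to do a bit more bookkeeping to reconstruct what (\ref{eqn5}) was tailored to give directly.
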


\begin{proof}
We have
\begin{equation*}
S_3(\alp_3)\geq \frac{P^{1/2}}{T}\sum_{\bfx\in \Z^3} \bfome (\bfx)\id_{[|\log(x_1^k-\alp_2 x_2^k)-\log(\alp_3 x_3^k)| <P^{-1/2}]}.
\end{equation*}
Hence there exists a constant $1>c_1>0$ (depending on $\alp_2$ and the support of $\bfome$) such that
\begin{equation*}
S_3(\alp_3)\geq \frac{P^{1/2}}{T}\sum_{\bfx\in \Z^3} \bfome (\bfx)\id_{[|x_1^k-\alp_2 x_2^k-\alp_3 x_3^k| < c_1 P^{k-1/2}]}.
\end{equation*}
We recall the constraints of the support of $\bfome$ given by the system of equations (\ref{eqn5}) and we recall that $\alp_3\in [1/2,1]$. Hence, after fixing $x_2$ and $x_3$ in the support of $\ome_2(P^{-1}x_2)$ resp. $\ome_3(P^{-1}x_3)$, there are at least $\gg P^{1/2}$ choices for for $x_1$ that satisfy the inequality
$$|x_1^k-\alp_2 x_2^k-\alp_3 x_3^k| < c_1 P^{k-1/2}.$$
This leads to the lower bound
$$S_3(\alp_3)\gg \frac{P^{1/2}}{T}P^{2+1/2}\gg P^{3-k}\tet.$$
Here the implied constant only depends on $\alp_2$ and the support of $\bfome$. 
\end{proof}

If we consider $S_3(\alp_3)$ as the main term of the counting function $S_2(\alp_3)$, then Lemma \ref{lem1} shows us that we should only expect to count at least one solution to the inequality (\ref{eqn1}) if $\tet\gg P^{k-3}$.\par
Now we turn to estimates for $S_4(\alp_3)$ (on average), which we consider as an error term. As in \cite{BouA16}, we use Chebyshev's inequality to bound
\begin{equation}\label{eqn20}
\meas\{\alp_3\in [1/2,1]: |S_4(\alp_3)|\geq c_2 P^{3-k}\tet\}\leq c_2^{-2}\tet^{-2}P^{2k-6} \int_{1/2}^1|S_4(\alp_3)|^2\d \alp_3.
\end{equation}
Here $c_2$ is the constant given by Lemma \ref{lem1}. 
Set 
$$I_1:=\int_{1/2}^1|S_4(\alp_3)|^2\d \alp_3.$$
By the symmetry of the weight function $\ome_2$ we have
$$\left|\hat{\ome}_2\left(\frac{t}{T}\right)-\hat{\ome}_2\left(\frac{t}{P^{1/2}}\right)\right|\leq c_3 \min\left(1,\frac{t^2}{P},\left(\frac{T}{|t|}\right)^{10}\right),$$
for a positive constant $c_3$. By Parseval's theorem we have
$$I_1\leq c_3^2T^{-2}\int_\R \min\left(1,\frac{t^4}{P^2},\left(\frac{T}{|t|}\right)^{20}\right)|F_1(t)|^2|F_2(-kt)|^2\d t.$$
The contribution of values of $t$ in the integral bounding $I_1$ with $|t|<P^{1/10}$, say,  turns out to be negligible. 
Define
$$I_2:= c_2^{-2}\tet^{-2}P^{2k-6}c_3^2T^{-2} \int_{|t|>P^{1/10}} \min\left(1,\left(\frac{T}{|t|}\right)^{20}\right)|F_1(t)|^2|F_2(-kt)|^2\d t.$$
Then one has
\begin{equation}\label{eqn21}
\meas\{\alp_3\in [1/2,1]: |S_4(\alp_3)|\geq c_2 P^{3-k}\tet\}\leq c_4P^{-3/2}+I_2,
\end{equation}
for some positive constant $c_4$. 
The first term in the bound on the right hand side is already sufficiently small for our purposes. Hence our goal now is to bound the integral $I_2$. For this we combine estimates for $F_2(t)$ for not too small values of $t$ with an $L^2$-estimate of the exponential sum $F_1(t)$.\par

Let
\begin{equation}\label{eqn24}
I_3:=\int_{\R} \min\left(1,\left(\frac{T}{|t|}\right)^{10}\right) |F_1(t)|^2\d t.
\end{equation}
We bound $I_2$ by
\begin{equation}\label{eqn27}
I_2\ll P^{-6} \max_{|t|\geq P^{1/10}}\left(\min\left(1,\left(\frac{T}{|t|}\right)\right)|F_2(t)|\right)^{2} I_3.
\end{equation}
We now continue to bound the integral $I_3$. For any $y\in \R$ we define
$$I(y):= \int_{\R} \min\left(1,\left(\frac{T}{|t|}\right)^{10}\right)e^{ity}\d t.$$

\begin{lemma}\label{lemI}
For $y\in \R$ one has 
$$|I(y)|\ll \min \left(\frac{1}{|y|}, T\right).$$
\end{lemma}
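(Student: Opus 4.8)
The plan is to prove the two bounds $|I(y)|\ll T$ and $|I(y)|\ll |y|^{-1}$ separately and then keep the better of the two. Throughout write $g(t):=\min\bigl(1,(T/|t|)^{10}\bigr)$ for the even, nonnegative weight in the integrand, so that $g(t)=1$ for $|t|\le T$ and $g(t)=(T/|t|)^{10}$ for $|t|>T$. The estimate $|I(y)|\ll T$ holds for every $y$ by the triangle inequality:
\[
|I(y)|\le \int_\R g(t)\,\d t=2T+2\int_T^\infty (T/t)^{10}\,\d t=2T+\tfrac{2}{9}T\ll T.
\]

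For the estimate $|I(y)|\ll |y|^{-1}$ — which is the only one we need when $|y|\ge 1/T$ — the idea is to integrate by parts in $t$ in order to exploit the oscillation of $e^{ity}$. The function $g$ is continuous on $\R$ (the two defining expressions agree at $t=\pm T$), tends to $0$ as $|t|\to\infty$, and is absolutely continuous with $g'(t)=0$ for $|t|<T$ and $|g'(t)|=10T^{10}|t|^{-11}$ for $|t|>T$; in particular $g'\in L^1(\R)$ with $\int_\R|g'(t)|\,\d t=2\int_T^\infty 10T^{10}t^{-11}\,\d t=2$. Hence integration by parts yields
\[
I(y)=-\frac{1}{iy}\int_\R g'(t)\,e^{ity}\,\d t,
\]
with vanishing boundary contribution, and therefore $|I(y)|\le |y|^{-1}\int_\R|g'(t)|\,\d t=2|y|^{-1}$.

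Combining the two bounds finishes the proof: if $|y|\le 1/T$ then $\min(|y|^{-1},T)=T$ and the first bound applies, while if $|y|>1/T$ then $\min(|y|^{-1},T)=|y|^{-1}$ and the second applies. The only point requiring a word of care is the legitimacy of the integration by parts at the corners of $g$ at $t=\pm T$: since $g$ is continuous there and piecewise $C^1$ with $g'\in L^1$, no boundary or singular terms survive; alternatively one may split $\int_\R=\int_{|t|<T}+\int_{|t|>T}$, integrate by parts on each piece, and note that the endpoint contributions at $t=\pm T$ cancel by continuity of $g$. I do not anticipate any genuine obstacle here; this is a routine oscillatory-integral estimate.
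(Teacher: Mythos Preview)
Your proof is correct and follows essentially the same approach as the paper: the triangle inequality for the bound $|I(y)|\ll T$, and integration by parts to obtain $|I(y)|\ll |y|^{-1}$. The only cosmetic difference is that the paper splits the integral into $|t|\le T$ and $|t|>T$ before integrating by parts (evaluating $\int_{|t|\le T} e^{ity}\,\d t$ directly), whereas you perform a single global integration by parts using the absolute continuity of $g$; the underlying idea is identical.
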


\begin{proof}
We can always bound $I(y)$ by
\begin{equation*}
\begin{split}
|I(y)|&\leq \int_{\R}\min\left(1,\left(\frac{T}{|t|}\right)^{10}\right)\d t \\
&\leq 2T+\int_{|t|>T}\left(\frac{T}{|t|}\right)^{10}\d t \\
& \ll 2T+T^{-9+10} \ll T.
\end{split}
\end{equation*}
On the other hand, for $y\neq 0$, and using partial integration we can bound
\begin{equation*}
\begin{split}
I(y)&= \int_{|t|\leq T} e^{ity}\d t + \int_{|t|>T} \left(\frac{T}{|t|}\right)^{10} e^{ity} \d t \\
& \ll \frac{1}{|y|}.
\end{split}
\end{equation*}
\end{proof}

We now introduce the notation $\bfy=(y_1,\ldots, y_4)$ and write 
$$\bfometil (\bfy):= \ome_1(y_1)\ome_2(y_2)\ome_1(y_3)\ome_2(y_4).$$
For $\frac{1}{\log P} \ll U\leq T$ we define
\begin{equation}\label{eqnI4}
I_4(U):= U \sum_{\bfy\in \Z^4}\bfometil \left(\frac{\bfy}{P}\right) \id_{|\log (y_1^k-\alp_2 y_2^k)-\log (y_3^k-\alp_2 y_4^k)|<\frac{1}{U}}. 
\end{equation}
By Lemma \ref{lemI} and a dyadic decomposition we can bound $I_3$ by
$$I_3 \ll \log P \sup_{\frac{1}{\log P}\ll U\leq T} I_4(U).$$
We further bound $I_4(U)$ by
$$I_4(U) \leq U \sum_{\bfy\in \Z^4}\bfometil \left(\frac{\bfy}{P}\right) \id_{|y_1^k-\alp_2 y_2^k-(y_3^k-\alp_2 y_4^k)|\ll \frac{P^k}{U}}.$$
We now bound $I_4(U)$ for small values of $U$ and use in the next section work of Huang \cite{Huang} to treat the case of large values of $U$. For $U\ll P$ we fix an arbitrary choice of values for $y_2,y_3,y_4$ in the support of $\ome_1$, resp. $\ome_2$, and consider the inequality
$$ |y_1^k-\alp_2 y_2^k-(y_3^k-\alp_2 y_4^k)|\ll \frac{P^k}{U}.$$
This restricts $y_1^k$ to an interval of length $\ll \frac{P^k}{U}$. Due to the support of $\ome_1$ the variable $y_1$ is itself of size $\sim P^k$. Hence there are at most $\ll \frac{P}{U}$ choices for $y_1$ and we obtain
\begin{equation}\label{eqn25}
I_4(U)\ll U P^3 \frac{P}{U}\ll P^4.
\end{equation}

\section{A counting problem}
The goal of this section is to give an upper bound for $I_4(U)$ for $c_5P \leq U\leq T$ (for $c_5$ a sufficiently large positive constant) using work of Huang \cite{Huang}. In order to apply his work we need to understand the function $I_4(U)$ as a counting function of rational points close to a hypersurface with non-vanishing Gaussian curvature. Before we give the details of our application, we recall one of the main results in \cite{Huang} that we are going to use. Assume that $\mathcal{S}$ is a piece of a hypersurface in real $n$-space given in the form
$$(\bfz, f(\bfz)), \quad\bfz = (z_1,\ldots, z_{n-1})\in \mathcal{D},$$
where $\calD\subset \R^{n-1}$ is a connected open bounded set and $f$ is a smooth function on $\calD$. Assume that for any point $\bfz\in \calD$ the Hessian $\nabla^2 f(\bfz)$ satisfies
$$C_1< |\det \nabla^2f(\bfz)| < C_2,$$
for positive constants $C_1,C_2$. Then after taking $\calD$ small enough one finds that $\nabla f: \calD \rightarrow \nabla f(\calD)$ is a diffeomorphism. Let $\ome$ be a smooth and compactly supported weight function on $\R^{n-1}$ with support inside $\calD$. For $0<\del <1/2$  define the counting function
$$N_\calS^{\ome}(Q,\del) = \sum_{\substack{\bfa\in \Z^{n-1}\\ q\leq Q \\ \Vert qf(\bfa/q)\Vert <\del}}\ome\left(\frac{\bfa}{q}\right).$$

\begin{theorem}[Theorem 2 in \cite{Huang}]\label{thm3}
Under the above assumptions one has
$$N_\calS^{\ome}(Q,\del) =\frac{2\hat{\ome}(0)}{n}\del Q^n + O_{\calS, \ome}(E_n(Q)),$$
with 
$$E_3(Q)= Q^2 \exp(c\sqrt{\log Q}),$$
and 
$$E_n(Q)= Q^{n-1}(\log Q)^\kap, \quad n\geq 4,$$
for some positive constants $c$ and $\kap$.
\end{theorem}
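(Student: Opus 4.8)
The plan is to detect the condition $\Vert qf(\bfa/q)\Vert<\del$ harmonically, apply Poisson summation in the numerator vector $\bfa$, read off the main term from the trivial frequency, and estimate the remaining frequencies by stationary phase using the non-vanishing Hessian; this reduces the error to a counting problem for the Legendre-dual hypersurface, which one then iterates.

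First I would sandwich $\id_{\Vert\,\cdot\,\Vert<\del}$ between Beurling--Selberg majorant and minorant functions band-limited to frequencies $|h|\le H$, for a parameter $H$ (say $H\asymp Q$) to be optimised; their zeroth Fourier coefficient is $2\del+O(H^{-1})$ and the $h$-th is $\ll\min(\del+H^{-1},|h|^{-1})$. Since $\bfa/q$ lies in the support of $\ome$ there are $\asymp q^{n-1}$ relevant $\bfa$ for each $q\le Q$; writing $e(x)=e^{2\pi ix}$, Poisson summation in $\bfa$ (substituting $\bfa=q\bfz$) turns the smoothed count into
$$\sum_{|h|\le H}c_h\sum_{q\le Q}q^{n-1}\sum_{\bfc\in\Z^{n-1}}\int_{\R^{n-1}}\ome(\bfz)\,e\bigl(q(hf(\bfz)-\bfc\cdot\bfz)\bigr)\,d\bfz.$$
The term $h=\mathbf 0$, $\bfc=\mathbf 0$ equals $(2\del+O(H^{-1}))\hat\ome(\mathbf 0)\sum_{q\le Q}q^{n-1}=\tfrac{2\hat\ome(\mathbf 0)}{n}\del Q^{n}+O(\del Q^{n-1}+Q^{n}H^{-1})$, the asserted main term with admissible error; the terms $h=\mathbf 0$, $\bfc\ne\mathbf 0$ give $\sum_q q^{n-1}\hat\ome(q\bfc)$ and are negligible by the rapid decay of $\hat\ome$.

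For $h\ne 0$ the phase $\psi(\bfz)=hf(\bfz)-\bfc\cdot\bfz$ has gradient $h\nabla f(\bfz)-\bfc$; since $\nabla f$ is a diffeomorphism on the (suitably shrunk) domain, there is a unique stationary point $\bfz_0=\bfz_0(\bfc,h)$ exactly when $\bfc/h\in\nabla f(\supp\ome)$, with Hessian $h\nabla^2 f(\bfz_0)$ of determinant $\asymp|h|^{n-1}$; otherwise $|\nabla\psi|\gg|h|+|\bfc|$ and repeated integration by parts makes the integral negligible. Stationary phase evaluates the integral as $c_n\,\ome(\bfz_0)\,|\det\nabla^2 f(\bfz_0)|^{-1/2}(q|h|)^{-(n-1)/2}e\bigl(q\beta(\bfc,h)+\tfrac{\pi}{4}\sigma\bigr)+O\bigl((q|h|)^{-(n+1)/2}\bigr)$, with $\beta(\bfc,h)=hf(\bfz_0)-\bfc\cdot\bfz_0$. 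Estimating the $q$-sum by partial summation, $\bigl|\sum_{q\le Q}q^{(n-1)/2}e(q\beta)\bigr|\ll Q^{(n-1)/2}\min(Q,\Vert\beta\Vert^{-1})$, so the contribution of the nonzero frequencies is, after a dyadic decomposition, essentially
$$Q^{(n+1)/2}\sum_{1\le|h|\le H}|c_h|\,|h|^{-(n-1)/2}\sum_{\bfc/h\in\nabla f(\calD)}\min\bigl(1,(Q\Vert\beta(\bfc,h)\Vert)^{-1}\bigr).$$
The key structural point is that $\beta(\bfc,h)=-h\,g(\bfc/h)$, where $g$ is the Legendre transform of $f$, whose Hessian is the inverse of that of $f$ and so again has determinant bounded away from $0$ and $\infty$; hence, splitting $\Vert\beta\Vert$ dyadically, the inner double sum over $h$ and $\bfc$ is a counting function of rational points near the curved hypersurface $\{(\bfxi,g(\bfxi))\}$ --- a problem of exactly the original shape with $Q$ replaced by (a dyadic range up to) $H$. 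This produces a self-improving functional inequality for $M_n(Q):=\sup$ of the error over all such problems, schematically $M_n(Q)\ll Q^{n-1}+(\text{explicit loss})\cdot M_n(Q')$ with $Q'$ a fixed fractional power of $Q$. For $n\ge 4$ the gain at each step is a genuine power of $Q$, so the recursion closes after $O(1)$ iterations, each costing a power of $\log Q$, and yields $E_n(Q)=Q^{n-1}(\log Q)^{\kap}$; for $n=3$ the recursion is exactly critical --- a bounded-factor loss per step while $\log Q$ is roughly halved --- so one iterates $\asymp\sqrt{\log Q}$ times and balances, obtaining $E_3(Q)=Q^{2}\exp(c\sqrt{\log Q})$.

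The main obstacle is running this recursion rigorously and uniformly in $\del$. A bound taking absolute values throughout after stationary phase gives only $\del^{(1-n)/2}Q^{(n+1)/2}$, which beats $Q^{n-1}$ only when $\del\gg Q^{-(n-3)/(n-1)}$ (and never when $n=3$); the whole difficulty is to extract the cancellation in the $q$-sum, and the reason one can is precisely that the dual sum is again a count of rational points near a curved hypersurface. One must also check at each stage that the dual problem still satisfies the hypotheses (Hessian determinant bounded above and below, weight supported where the relevant gradient map is a diffeomorphism), control the frequencies $\bfc/h$ approaching $\partial(\nabla f(\calD))$ --- which forces one to restrict $\calD$ and insert fresh smooth cut-offs --- keep track of the interplay between $H$, the dyadic ranges and the size of $\del$, and, in the borderline dimension $n=3$, tune the number of iterations so as to land precisely on $\exp(c\sqrt{\log Q})$.
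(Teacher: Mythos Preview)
The paper does not prove Theorem~\ref{thm3}; it is quoted verbatim as Theorem~2 from Huang's preprint \cite{Huang} and then \emph{applied} to the counting function $I_4(U)$. There is therefore nothing in this paper to compare your argument against.

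That said, your sketch is a fair outline of Huang's own method: detect the fractional-part condition with Selberg-type majorants/minorants, apply Poisson summation in the numerator variables, pull out the main term from the zero frequency, use stationary phase (exploiting the non-vanishing Hessian) on the remaining frequencies, recognise that the dual sum is governed by the Legendre transform $g$ of $f$ --- which again parametrises a hypersurface of non-vanishing Gaussian curvature --- and bootstrap. You have also correctly identified the reason the error terms differ between $n=3$ and $n\ge 4$: the recursion is critical in dimension three, yielding the $\exp(c\sqrt{\log Q})$ loss rather than a fixed power of $\log Q$. The ``main obstacle'' paragraph is honest about where the real work lies (uniformity in $\del$, boundary effects for $\bfc/h$ near $\partial(\nabla f(\calD))$, tracking constants through the iteration). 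If you want to turn this into a genuine proof you will have to carry out those details, but for the purposes of the present paper none of that is needed --- the theorem is simply imported as a black box.
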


We now apply this result to our counting function $I_4(U)$. If the inequality 
$$|y_1^k-\alp_2 y_2^k-(y_3^k-\alp_2 y_4^k)|\ll \frac{P^k}{U}$$
holds, then 
$$y_1^k\in [y_3^k+\alp_2y_2^k-\alp_2y_4^k - c_6 \frac{P^k}{U},y_3^k+\alp_2y_2^k-\alp_2y_4^k + c_6 \frac{P^k}{U}],$$
for some positive constant $c_6$. Assume that $P^{-1}y_3$ lies in the support of $\ome_1$ and $P^{-1}y_2$, $P^{-1}y_4$ lie in the support of $\ome_2$. Then, by equation (\ref{eqn5}) one has $y_3^k+\alp_2y_2^k-\alp_2y_4^k\gg P^k$, and hence
$$y_1 \in [(y_3^k+\alp_2y_2^k-\alp_2y_4^k)^{1/k} - c_7\frac{P}{U},(y_3^k+\alp_2y_2^k-\alp_2y_4^k)^{1/k}+ c_7 \frac{P}{U}]\cap \Z,$$
for some positive constant $c_7$. For $c_5$ sufficiently large and $U\geq c_5P $ we see that this interval can contain at most one integer point and if this is the case, then
$$\Vert (y_3^k+\alp_2y_2^k-\alp_2y_4^k)^{1/k}\Vert \ll \frac{P}{U}.$$
Hence, for $c_5$ sufficiently large, we can rewrite the bound for $I_4(U)$ as
$$I_4(U)\leq U \sum_{\substack{(y_2,y_3,y_4)\in \Z^3\\ 4^{-1}a_1 P\leq y_3\leq b_1P\\\Vert (y_3^k+\alp_2y_2^k-\alp_2y_4^k)^{1/k}\Vert \ll \frac{P}{U} }} \ome_2\left(\frac{y_2}{P}\right)\ome_2\left(\frac{y_4}{P}\right).$$
Let $\mathcal{D}:= [(1/4) a_2b_1^{-1},4b_2a_1^{-1}]^2$. For $(z_2,z_4)\in\mathcal{D}$ we define the function 
$$f(z_2,z_4):= (1+\alp_2z_2^k-\alp_2z_4^k)^{1/k},$$
which is well-defined by recalling the relations (\ref{eqn5}). Then there is a smooth weight function $0\leq \ome_4\leq 1$ with $\supp (\ome_4)\subset [(1/4) a_2b_1^{-1},4b_2a_1^{-1}]$ such that 
\begin{equation}\label{eqn23}
\begin{split}
I_4(U)&\leq U \sum_{\substack{(y_2,y_4)\in \Z^2\\ 4^{-1}a_1P\leq y_3\leq b_1P\\\Vert y_3f(y_2/y_3,y_4/y_3)\Vert \ll \frac{P}{U} }} \ome_4\left(\frac{y_2}{y_3}\right)\ome_4\left(\frac{y_4}{y_3}\right)\\
&\leq U \sum_{\substack{(y_2,y_4)\in \Z^2\\ 1\leq y_3\leq b_1P\\\Vert y_3f(y_2/y_3,y_4/y_3)\Vert \ll \frac{P}{U} }} \ome_4\left(\frac{y_2}{y_3}\right)\ome_4\left(\frac{y_4}{y_3}\right).
\end{split}
\end{equation}
We compute the Hessian $\nabla^2 f = \left(\frac{\partial^2 f}{\partial z_i\partial z_j}\right)$ and find that
\begin{equation*}
\begin{split}
\det (\nabla^2f) =& (k-1)^2\alp_2^2(1+\alp_2z_2^k-\alp_2z_4^k)^{2/k-4} \\& \times \left\{-z_2^{k-2}z_4^{k-2}(1-\alp_2z_4^k)(1+\alp_2z_2^k)-\alp_2^2z_2^{2k-2}z_4^{2k-2}\right\}.
\end{split}
\end{equation*}
Again, we recall the first of the conditions in (\ref{eqn5}) and deduce that $\det (\nabla^2f)$ is strictly negative on the set $z_2,z_4$ such that $z_2,z_4\in \supp (\ome_4)$. Hence there are positive constants $c_8$ and $c_9$ such that
$$c_8\leq |\det (\nabla^2f)|\leq c_9,$$
for all $(z_2,z_4)\in \calD$, i.e. the hypersurface determined in its Monge form by $f(z_2,z_4)$ has non-vanishing curvature on the domain $\mathcal{D}$.\par
Now we are in a position that we can apply Theorem \ref{thm3} (after refining the domain $\mathcal{D}$ sufficiently) with the parameters 
$Q=b_1P$ and $\del \ll \frac{P}{U}$. Note that $\del < 1/2$ if we take $c_5$ sufficiently large. We deduce that 
\begin{equation}\label{eqn22}
I_4(U)\ll U(\del P^3+ P^{2+\eps}).
\end{equation}
In fact, the factor $P^\eps$ could be replaced by $\exp(c\sqrt{\log P})$ for a positive constant $c$. We summarize our resulting bound for $I_3$ in the following proposition.

\begin{proposition}\label{prop1}
Assume that (\ref{eqn5}) holds. Then we have
$$I_3\ll_\eps P^{4+\eps}+\frac{P^{k+2+\eps}}{\tet},$$
for any $\eps >0$.
\end{proposition}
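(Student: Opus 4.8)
The plan is to feed the two complementary bounds on $I_4(U)$ already established — the elementary estimate (\ref{eqn25}) for $U$ up to a fixed multiple of $P$, and the Huang-type estimate (\ref{eqn22}) for $c_5 P \le U \le T$ — into the dyadic bound $I_3 \ll \log P \sup_{1/\log P \ll U \le T} I_4(U)$ derived above via Lemma \ref{lemI}.

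First I split the supremum at $U = c_5 P$, with $c_5$ the constant fixed in Section 3 (large enough that $\del < 1/2$ and that the relevant interval for $y_1$ contains at most one integer). For $\tfrac{1}{\log P} \ll U \le c_5 P$ the argument that gave (\ref{eqn25}) applies verbatim — its implied constant may be allowed to depend on the fixed constant $c_5$ — so $I_4(U) \ll P^4$. For $c_5 P \le U \le T$, estimate (\ref{eqn22}) together with $\del \ll P/U$ gives $I_4(U) \ll U \del P^3 + U P^{2+\eps} \ll P^4 + U P^{2+\eps} \le P^4 + T P^{2+\eps}$; since $T = 2P^k/(c_0\tet)$, the last term is $\ll P^{k+2+\eps}/\tet$. (If $T \le c_5 P$ the second range is empty and only the first bound contributes.) Hence $\sup_{1/\log P \ll U \le T} I_4(U) \ll P^4 + P^{k+2+\eps}/\tet$, and therefore $I_3 \ll \log P\,\bigl(P^4 + P^{k+2+\eps}/\tet\bigr)$. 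Absorbing the factor $\log P$ into $P^\eps$ and relabelling $\eps$ yields the claimed bound.

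There is no essential difficulty here; the only point needing care is that the two $U$-ranges together cover all of $[\tfrac{1}{\log P}, T]$, which is why one instantiates the hypothesis "$U \ll P$" in the derivation of (\ref{eqn25}) with the explicit constant $c_5$ coming from Section 3, and separately disposes of the degenerate case $T \le c_5 P$.
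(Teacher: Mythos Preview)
Your argument is correct and matches the paper's approach exactly: the paper presents Proposition~\ref{prop1} as a summary of the preceding estimates, combining the dyadic reduction $I_3\ll \log P\,\sup_U I_4(U)$ with the bound (\ref{eqn25}) for small $U$ and (\ref{eqn22}) for $c_5P\le U\le T$, and then inserting $T\asymp P^k/\tet$. Your additional remarks on matching the two $U$-ranges and on the degenerate case $T\le c_5P$ only make explicit what the paper leaves implicit.
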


\section{Proof of Theorem \ref{thm1}}

From equation (\ref{eqn27}) we recall the following bound on $I_2$
\begin{equation*}
I_2\ll P^{-6} \max_{|t|\geq P^{1/10}}\left(\min\left(1,\left(\frac{T}{|t|}\right)\right)|F_2(t)|\right)^{2} I_3.
\end{equation*}
In fact, our function $F_2(t)$ coincides with the one in \cite{BouA16} (after adjusting the smooth weight function). In particular equation (2.6) in \cite{BouA16} shows that
$$F_2(t)\ll P^{1/2} \int_{-\infty}^\infty \frac{|\zeta(\frac{1}{2}+i(y-t))|}{1+|y|^{10}}\d y.$$
Hence, assuming the Lindel{\"o}f hypothesis for the Riemann zeta function, we get the bound
$$F_2(t)\ll P^{1/2}(1+|t|)^\eps.$$
Together with equation (\ref{eqn27}) and Proposition \ref{prop1} we deduce for $\tet >1$ that 
\begin{equation*}
\begin{split}
I_2&\ll_\eps P^{-6}P^{1+\eps} \left(P^{4+\eps}+\frac{P^{k+2+\eps}}{\tet}\right)\\
&\ll P^{-1+\eps}+\frac{P^{k-3+\eps}}{\tet}.
\end{split}
\end{equation*}
Unconditionally we have $|\zeta(\frac{1}{2}+it)|\ll (1+|t|)^{1/6}$ and hence
$$F_2(t)\ll P^{1/2}|t|^{1/6}.$$
A combination of equation (\ref{eqn27}) and Proposition \ref{prop1} then leads to the bound
\begin{equation*}
\begin{split}
I_2&\ll_\eps P^{-6}P\left(\frac{P^k}{\tet}\right)^{1/3} \left(P^{4+\eps}+\frac{P^{k+2+\eps}}{\tet}\right)\\
&\ll P^{-1+k/3+\eps}\tet^{-1/3}+\frac{P^{(4/3)k-3+\eps}}{\tet^{4/3}}.
\end{split}
\end{equation*}
We combine these estimates with the bound in equation (\ref{eqn21}) and Lemma \ref{lem1} to deduce the following Proposition. 

\begin{proposition}\label{prop2}
Let $\alp_2>0$ and $k\geq 3$ be fixed. Let $P$ be a large real parameter and assume that $\tet > P^{k-3}$. 

(a) Assuming the Lindel{\"o}f hypothesis for the Riemann zeta function, the inequality 
$$|x_1^k-\alp_2x_2^k-\alp_3x_3^k|<\tet$$
has a non-trivial solution with $\max_i|x_i|\sim P$ for all $\alp_3\in [1/2,1]$ with the exception of a set of $\alp_3$ of measure at most
$$\ll P^{-1+\eps}+\frac{P^{k-3+\eps}}{\tet}.$$
(b) The same statement holds unconditionally with an exceptional set of $\alp_3\in [1/2,1]$ of measure at most
$$\ll P^{-1+k/3+\eps}\tet^{-1/3}+\frac{P^{(4/3)k-3+\eps}}{\tet^{4/3}}.$$
\end{proposition}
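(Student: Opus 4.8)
The plan is to deduce Proposition \ref{prop2} by assembling the ingredients already prepared: the main term lower bound from Lemma \ref{lem1}, the Chebyshev bound (\ref{eqn21}) on the exceptional set for $S_4$, the reduction $I_2 \ll P^{-6}\max_t(\cdots)^2 I_3$ from (\ref{eqn27}), the bound on $I_3$ in Proposition \ref{prop1}, and the two (conditional/unconditional) pointwise bounds on $F_2$ coming from \cite{BouA16}. First I would recall that, by Lemma \ref{lem1}, $S_3(\alp_3)\geq c_2 P^{3-k}\tet$ for every $\alp_3\in[1/2,1]$, so whenever $|S_4(\alp_3)|<c_2 P^{3-k}\tet$ we have $S_2(\alp_3)=S_3(\alp_3)+S_4(\alp_3)>0$, hence $S_1(\alp_3)\geq S_2(\alp_3)>0$, which forces a solution of $|\log(x_1^k-\alp_2x_2^k)-\log(\alp_3x_3^k)|<c_0P^{-k}\tet$ with $P^{-1}\bfx\in\supp\bfome$, and therefore (by the computation preceding the definition of $S_1$, using (\ref{eqn5}) and $\tet<P^k$) a genuine solution of (\ref{eqn1}) with $\max_i|x_i|\sim P$. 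So the measure of the exceptional set is at most $\meas\{\alp_3\in[1/2,1]:|S_4(\alp_3)|\geq c_2 P^{3-k}\tet\}$, which by (\ref{eqn21}) is $\ll P^{-3/2}+I_2$.

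Next I would plug the $F_2$ bounds into (\ref{eqn27}). On the Lindel\"of hypothesis, $F_2(t)\ll P^{1/2}(1+|t|)^\eps$, so for $|t|\geq P^{1/10}$ the quantity $\min(1,T/|t|)|F_2(t)|$ is $\ll P^{1/2}(1+|t|)^\eps$; one must be a little careful that the factor $\min(1,T/|t|)$ kills the polynomial growth in $t$ once $|t|>T$, and for $|t|\leq T$ the $|t|^\eps$ is absorbed since $T\ll P^k$, giving $\max_t(\cdots)^2\ll P^{1+\eps}$. Combined with Proposition \ref{prop1} this yields $I_2\ll P^{-6}\cdot P^{1+\eps}\cdot(P^{4+\eps}+P^{k+2+\eps}\tet^{-1})\ll P^{-1+\eps}+P^{k-3+\eps}\tet^{-1}$, and since $\tet>P^{k-3}$ the error $P^{-3/2}$ is dominated, giving part (a). Unconditionally one uses the convexity bound $|\zeta(1/2+it)|\ll(1+|t|)^{1/6}$, hence $F_2(t)\ll P^{1/2}|t|^{1/6}$; now $\min(1,T/|t|)|F_2(t)|$ is largest near $|t|\sim T$, where it is $\ll P^{1/2}T^{1/6}\ll P^{1/2}(P^k/\tet)^{1/6}$, so $\max_t(\cdots)^2\ll P(P^k/\tet)^{1/3}$. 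Feeding this and Proposition \ref{prop1} into (\ref{eqn27}) gives $I_2\ll P^{-6}\cdot P(P^k/\tet)^{1/3}\cdot(P^{4+\eps}+P^{k+2+\eps}\tet^{-1})\ll P^{-1+k/3+\eps}\tet^{-1/3}+P^{(4/3)k-3+\eps}\tet^{-4/3}$, which is part (b).

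The step I expect to need the most care is the bookkeeping in the unconditional case: one must verify that the supremum over $|t|\geq P^{1/10}$ of $\min(1,T/|t|)|F_2(t)|$ is indeed attained (up to constants) at $|t|\asymp T$ rather than somewhere in the range $P^{1/10}\leq|t|\leq T$, i.e. that $P^{1/2}|t|^{1/6}$ increasing in $|t|$ while $\min(1,T/|t|)=1$ there, and then decays like $|t|^{1/6}(T/|t|)=T|t|^{-5/6}$ for $|t|>T$, so the maximum is $\asymp P^{1/2}T^{1/6}$; this is routine but worth stating cleanly. One should also note that the passage from $\tet>P^{k-3}$ and $\tet<P^k$ (needed for the reduction to $S_1$) is consistent, and that all implied constants depend only on $\alp_2$ and the support of $\bfome$, hence ultimately only on $\alp_2$ and $k$; the $P^\eps$ factors (or $\exp(c\sqrt{\log P})$, as remarked after (\ref{eqn22})) are harmless throughout. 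Finally, I would remark that no properties of $F_1$ beyond the $L^2$ bound already encoded in Proposition \ref{prop1} are used here, so the proof is essentially an arithmetic combination of the displayed inequalities.
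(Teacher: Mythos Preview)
Your proposal is correct and follows essentially the same route as the paper: combine Lemma \ref{lem1} and (\ref{eqn21}) to reduce to bounding $I_2$, then insert the conditional/unconditional bounds for $F_2$ into (\ref{eqn27}) together with Proposition \ref{prop1}. Your additional remarks on where the supremum in (\ref{eqn27}) is attained and on the logical chain $S_2>0\Rightarrow S_1>0\Rightarrow$ solution are more explicit than the paper, but the argument is the same.
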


Theorem \ref{thm1}(a) follows now from Proposition \ref{prop2}(a) via the Borel-Cantelli lemma and a dyadic decomposition of all possible values of $P$ (see for example section 8 in \cite{BBRR}). Part (b) of Proposition \ref{prop2} would lead to an unconditional result with the exponent $k-12/5+\eps$ in Theorem \ref{thm1}(b) replaced by the weaker bound $k-9/4+\eps$. In order to obtain the stronger bound in Theorem \ref{thm1}(b) we follow the ideas of Bourgain in \cite[Section3 + Section 4]{BouA16}. Indeed it turns out that the same formalism works in our case provided we can establish the following two main ingredients that correspond to Lemma 2 and Lemma 3 in \cite{BouA16}. The first of the results that we need is the following mean square bound.

\begin{lemma}\label{lem2}
Let $F_1(t)$ be defined as before and let $T>P^2$. Assume that equation (\ref{eqn5}) holds. Then one has
$$\meas\{|t|\leq T: |F_1(t)|>\lam\}\ll TP^{2+\eps}\lam^{-2}.$$
\end{lemma}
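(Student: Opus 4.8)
The plan is to prove Lemma~\ref{lem2} by a standard large-values/mean-square argument, relating the measure of the level set to the fourth moment of $F_1$. Since $\meas\{|t|\le T:|F_1(t)|>\lam\}\le \lam^{-2}\int_{|t|\le T}|F_1(t)|^2\,dt$ would only give the trivial bound $TP^4\lam^{-2}$, the point is that one should instead bound a \emph{smoothed fourth moment}: introduce a nonnegative smooth majorant $\psi$ of $\id_{[-T,T]}$ with $\hat\psi$ supported in, say, $[-1,1]$ (so $\psi(t)\gg 1$ on $|t|\le T$ and $\psi$ decays rapidly), and estimate
\[
\meas\{|t|\le T:|F_1(t)|>\lam\}\;\le\;\lam^{-4}\int_{\R}\psi(t)\,|F_1(t)|^4\,dt.
\]
Expanding $|F_1(t)|^4$ as a sum over $(x_1,x_2,x_1',x_2',x_1'',x_2'',x_1''',x_2''')$ with weights $\bfometil$-type factors evaluated at $P^{-1}$ times the variables, the $t$-integral of $\psi(t)$ against $\exp\!\big(it[\log(x_1^k-\alp_2x_2^k)+\log(x_1''^k-\alp_2x_2''^k)-\log(x_1'^k-\alp_2x_2'^k)-\log(x_1'''^k-\alp_2x_2'''^k)]\big)$ equals $T\hat\psi\big(T[\cdots]\big)$, which is $\ll T$ and is negligible unless
\[
\Big|(x_1^k-\alp_2x_2^k)(x_1''^k-\alp_2x_2''^k)-(x_1'^k-\alp_2x_2'^k)(x_1'''^k-\alp_2x_2'''^k)\Big|\ll \frac{P^{2k}}{T}.
\]
Because $T>P^2$, the right-hand side is $\ll P^{2k-2}$, so after fixing $x_1,x_2,x_1',x_2',x_1''$ (giving $\ll P^5$ choices, each variable of size $\sim P$) the product $(x_1'^k-\alp_2x_2'^k)(x_1'''^k-\alp_2x_2'''^k)$ is pinned to an interval of length $\ll P^{2k-2}$; since $x_1''^k-\alp_2x_2''^k\gg P^k$ by~(\ref{eqn5}), this forces $x_1'''^k$ into an interval of length $\ll P^{k-2}$, hence $\ll P^{-1}$ choices for $x_1'''$ out of $\sim P$. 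Thus the number of octuples contributing is $\ll P^{5}\cdot P^{-1}\cdot P = P^{5}$ — wait, more carefully: $x_1'''$ contributes $\ll 1+P^{k-2}/P^{k-1}=O(1)$ but actually $\ll \max(1,P^{-1}\cdot P)=O(1)$ choices; let me instead fix the six variables $x_1,x_2,x_1',x_2',x_1'',x_2''$ freely ($\ll P^{6}$), which determines the target product up to $\ll P^{2k-2}$, then $x_1'''^k$ lies in an interval of length $\ll P^{k-2}$ so there are $\ll P^{k-2}/P^{k-1}+1 \ll 1$ choices for $x_1'''$, and $x_2'''$ ranges over $\ll P$ values but is then constrained — actually once $x_1''',x_1'',x_2'',x_1',x_2'$ are fixed the constraint pins $x_2'''^k$ to length $\ll P^{2k-2}/P^k = P^{k-2}$, giving $\ll 1$ further choices. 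Either bookkeeping yields $\ll P^{6+\eps}$ admissible octuples, each with a factor $\ll T$ from the integral, hence $\int\psi|F_1|^4\ll T P^{6+\eps}$, and therefore $\meas\{\cdots\}\ll TP^{6+\eps}\lam^{-4}$.

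This, however, gives $\lam^{-4}TP^{6}$ rather than the claimed $\lam^{-2}TP^{2}$; the two bounds agree exactly at $\lam\sim P^{2}$, and for $\lam$ in that range the claim is immediate, so the real content is a Halász-type interpolation. The cleaner route, which I would actually follow, is: combine the second-moment bound $\int_{|t|\le T}|F_1(t)|^2\,dt\ll TP^{2+\eps}$ with the trivial pointwise bound $|F_1(t)|\ll P^2$. The second moment bound itself follows from the same diagonal analysis as above with only \emph{four} variables: $\int_{\R}\psi(t)|F_1(t)|^2\,dt = \sum T\hat\psi(T[\log(x_1^k-\alp_2 x_2^k)-\log(x_1'^k-\alp_2 x_2'^k)])$, negligible unless $|(x_1^k-\alp_2x_2^k)-(x_1'^k-\alp_2x_2'^k)|\ll P^{2k}/(TP^k)\cdot P^k\ll P^{k-2}$ — wait: $|\log u - \log u'|\ll P^{-2}$ with $u,u'\sim P^k$ gives $|u-u'|\ll P^{k-2}$; fixing $x_1,x_2,x_1'$ ($\ll P^3$ choices) pins $x_1'^k$ — no, $x_1,x_2,x_1'$ fixed pins $x_1'^k-\alp_2 x_2'^k$ to length $\ll P^{k-2}$, hence $x_2'^k$ to length $\ll P^{k-2}$, so $\ll 1$ choices for $x_2'$; total $\ll P^{3}$ quadruples, integral $\ll TP^{3}$. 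Hmm, that gives $\int|F_1|^2\ll TP^3$ not $TP^2$. Let me recount: actually fixing $x_1,x_1',x_2'$ ($\ll P^3$) determines $x_1^k-\alp_2x_2^k$ up to $P^{k-2}$, hence $x_2^k$ up to $P^{k-2}$, hence $\ll 1$ choices of $x_2$, giving $\ll P^3$; alternatively the honest count of pairs $((x_1,x_2),(x_1',x_2'))$ with $|x_1^k-\alp_2x_2^k - x_1'^k+\alp_2x_2'^k|\ll P^{k-2}$ is: $\ll P^2$ choices for $(x_1,x_2)$, then $x_1'$ free ($\ll P$) pins $x_2'^k$ to an interval of length $\ll P^{k-2}+P^{k-2}\ll P^{k-2}$, i.e. $\ll 1$ choices of $x_2'$, total $\ll P^{3}$. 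So in fact the natural bound is $\int_{|t|\le T}|F_1(t)|^2 dt\ll TP^{3+\eps}$, and Chebyshev gives $\meas\{|F_1|>\lam\}\ll TP^{3+\eps}\lam^{-2}$, still one power of $P$ off from the Lemma.

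Hence the actual proof must exploit that $F_1$ is, up to the logarithmic change of variables, an exponential sum attached to the binary form $x_1^k-\alp_2x_2^k$, and invoke a genuine \emph{mean-value estimate} stronger than the diagonal bound — this is where the work of Titchmarsh/van der Corput alluded to in the introduction enters, and I expect \emph{this} to be the main obstacle: one needs $\int_{|t|\le T}|F_1(t)|^2\,dt\ll TP^{2+\eps}$ for $T>P^2$, which is essentially sharp (the expected size of $|F_1(t)|^2$ for a ``random'' $t$ is $P^2$, reflecting square-root cancellation over the $\sim P^2$ terms) and cannot come from merely counting near-solutions of $\log(x_1^k-\alp_2x_2^k)=\log(x_1'^k-\alp_2x_2'^k)$. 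The plan is therefore: (i) reduce via Parseval/Plancherel and the smooth majorant to understanding $\int \psi(t)|F_1(t)|^2\,dt$; (ii) write this as $\sum_{x_2,x_2'}\ome_2(\tfrac{x_2}{P})\ome_2(\tfrac{x_2'}{P})\,J(x_2,x_2')$ where $J$ is a double sum over $x_1,x_1'$ of $T\hat\psi(T\,\Delta)$ with $\Delta=\log\frac{x_1^k-\alp_2x_2^k}{x_1'^k-\alp_2x_2'^k}$; (iii) for the off-diagonal terms $x_2\ne x_2'$ apply van der Corput's method / Poisson summation in $x_1$ (exactly as Titchmarsh treats the Epstein zeta function), using that $T>P^2$ makes the relevant phase derivatives large enough to produce the needed extra saving of a full power of $P$; (iv) handle the diagonal $x_2=x_2'$ separately, where the count of $x_1,x_1'$ with $\log(x_1^k-\alp_2x_2^k)$ close to $\log(x_1'^k-\alp_2x_2'^k)$ contributes only $\ll TP^{2+\eps}$ since then $|x_1^k-x_1'^k|\ll P^{2k}/(TP^k)\ll P^{k-2}$ forces $x_1=x_1'$ (as $x_1\sim P$ and consecutive $k$-th powers differ by $\gg P^{k-1}\gg P^{k-2}$), giving the diagonal $\ll P\cdot P\cdot T = TP^2$; then Chebyshev's inequality $\meas\{|t|\le T:|F_1(t)|>\lam\}\le \lam^{-2}\int_{|t|\le T}|F_1(t)|^2\,dt\ll TP^{2+\eps}\lam^{-2}$ finishes the proof. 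The van der Corput estimate for the binary form $x_1^k-\alp_2x_2^k$, and verifying the requisite non-vanishing of the second derivative of $\log(x_1^k-\alp_2x_2^k)$ in $x_1$ on the support of $\ome_1$ (which holds by the support conditions~(\ref{eqn5})), is the technical heart that I would expect to require the most care, likely carried out in a separate lemma in the style of \cite{Tit34}.
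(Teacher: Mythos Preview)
Your proposal contains a genuine gap. You correctly identify that elementary diagonal counting of quadruples only yields
\[
\int_{|t|\le T}|F_1(t)|^2\,dt \;\ll\; T P^{3+\eps},
\]
one full power of $P$ short of what is needed. But your proposed remedy --- a van der Corput/Poisson step in $x_1$ for the off-diagonal $x_2\neq x_2'$ in the style of \cite{Tit34} --- is the wrong tool here. Once the $t$-integral has been carried out, the summand $T\hat\psi(T\Delta)$ is no longer an exponential in $x_1$, so there is no phase to which van der Corput differencing or a second derivative test can be applied; and if you keep the $t$-integral and bound $|F_1(t)|$ pointwise you are back to Lemma~\ref{lem3}, which gives at best $|F_1(t)|^2\ll P^2|t|^{2/3}$ and hence $\int_{|t|\le T}|F_1|^2\ll P^2T^{5/3}$, far too large. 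The Titchmarsh/van der Corput argument does appear in the paper, but it is used for Lemma~\ref{lem3}, not here.

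The paper's actual proof of Lemma~\ref{lem2} proceeds differently and is much shorter: expand $\int_{|t|\le T}|F_1(t)|^2\,dt$ as a sum over quadruples, and via Lemma~\ref{lemI} and a dyadic decomposition bound it by $\log P\cdot \sup_{(\log P)^{-1}\ll U\le T} I_4(U)$, with $I_4(U)$ as in (\ref{eqnI4}). The required bound $I_4(U)\ll P^4+UP^{2+\eps}$ is exactly what was already established in (\ref{eqn25}) and (\ref{eqn22}); the latter rests on Huang's Theorem~\ref{thm3} on rational points near the hypersurface $\{(z_2,z_4,(1+\alp_2 z_2^k-\alp_2 z_4^k)^{1/k})\}$, whose non-vanishing curvature was verified in Section~3. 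It is this Diophantine input --- not exponential-sum cancellation --- that upgrades the naive $P^3$-term in your quadruple count to $P^{2+\eps}$. Since $T>P^2$, one gets $\int_{|t|\le T}|F_1|^2\ll TP^{2+\eps}$, and Chebyshev finishes as you indicated.
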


\begin{proof}
It is enough to establish the bound
$$\int_{|t|\leq T}|F_1(t)|^2\d t \ll P^{2+\eps} T.$$
The integral on the left hand side amounts to
$$\sum_{\substack{x_1,x_2\in \Z\\ y_1,y_2\in \Z}}\ome_1\left(\frac{x_1}{P}\right)\ome_2\left(\frac{x_2}{P}\right)\ome_1\left(\frac{y_1}{P}\right)\ome_2\left(\frac{y_2}{P}\right)\int_{|t|\leq T} e^{it (\log (x_1^k-\alp_2x_2^k)-\log(y_1^k-\alp_2y_2^k))}\d t.$$
Using the same ideas as in Lemma \ref{lemI} and a dyadic decomposition for the values of $\log (x_1^k-\alp_2x_2^k)-\log(y_1^k-\alp_2y_2^k)$ one can bound this expression by
$$\log P \sup_{\frac{1}{\log P}\ll U\leq T} I_4(U),$$
with $I_4(U)$ defined as in equation (\ref{eqnI4}). Hence by the bounds for $I_4(U)$ in equation (\ref{eqn25}) and equation (\ref{eqn22}) we obtain 
$$\int_{|t|\leq T}|F_1(t)|^2\d t\ll P^{4+\eps}+T P^{2+\eps},$$
which is what is required, since we assumed $T>P^2$.

\end{proof}

The second ingredient that is needed for the refinement in part (b) of Theorem \ref{thm1} is motivated by partial sum bounds of the Epstein zeta function in the case $k=2$. It turns out that the same bounds can be performed for the sum defining $F_1(t)$ although the quadratic form is replaced by a binary degree $k$ form. 

\begin{lemma}\label{lem3}
Assume that $|t|>P^2$ and assume that (\ref{eqn5}) holds. Then one has
$$|F_1(t)|\ll P|t|^{1/3+\eps}.$$
\end{lemma}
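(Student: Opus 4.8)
The plan is to regard $F_1(t)$ as a two-dimensional exponential sum and to estimate it by van der Corput's method, following Titchmarsh's treatment of the Epstein zeta function in \cite{Tit34} and its extension by Blomer \cite{Blo}, with the positive definite quadratic form there replaced by the binary form $x_1^k-\alp_2 x_2^k$. Write $\Phi(x_1,x_2):=t\log(x_1^k-\alp_2 x_2^k)$, so that
$$F_1(t)=\sum_{x_1,x_2\in\Z}\ome_1\Bigl(\frac{x_1}{P}\Bigr)\ome_2\Bigl(\frac{x_2}{P}\Bigr)e^{i\Phi(x_1,x_2)},$$
the variables ranging over a box of side $\asymp P$. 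By (\ref{eqn5}) the argument $x_1^k-\alp_2 x_2^k$ is $\asymp P^k$ on (a fixed neighbourhood of) the support, so $\Phi$ is smooth there with $\partial_{x_1}^{a}\partial_{x_2}^{b}\Phi\asymp |t|\,P^{-(a+b)}$ for all $a,b\ge 0$. We may also assume $P^2<|t|\le P^3$, since for $|t|>P^3$ the asserted bound follows from the trivial estimate $|F_1(t)|\ll P^2$.

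The geometric ingredient is the non-vanishing curvature of the level curves of $x_1^k-\alp_2 x_2^k$. A short computation gives the clean identity
$$\det\nabla^2\Phi=t^2\,\frac{k^2(k-1)\,\alp_2\,x_1^{k-2}x_2^{k-2}}{(x_1^k-\alp_2 x_2^k)^2},$$
which is strictly positive on the support; this is the analogue, for the binary degree-$k$ form, of Titchmarsh's Hessian computation for quadratic forms and of the curvature computation in Section~3. With this in hand one runs van der Corput's method on the sum. First apply the $B$-process --- Poisson summation in one variable followed by a stationary-phase evaluation --- which gives $|F_1(t)|\ll (P^2/|t|)^{1/2}$ times a two-dimensional sum over $\asymp |t|$ lattice points with an explicit, if implicitly defined, phase; the homogeneity of $x_1^k-\alp_2 x_2^k$ simplifies this phase considerably. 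Then extract further cancellation from that sum by a second round of van der Corput's method (Weyl differencing in the remaining variable, followed again by the $B$-process). Balancing the parameters in the iteration as in \cite{Tit34,Blo}, one bounds the intermediate sum by $|t|^{5/6+\eps}$ and obtains $|F_1(t)|\ll P|t|^{1/3+\eps}$.

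The crux is reaching the exponent $1/3$ and not something weaker. A single application of van der Corput's method to the inner sum over one of the two variables --- even via a high-order derivative test or an exponent pair --- is, with the currently available exponent pairs, no better than $P^{3/2}|t|^{1/6}$, and this exceeds $P|t|^{1/3}$ throughout the range $P^2<|t|<P^3$; one genuinely has to use cancellation in both variables at once, as Titchmarsh does. The work therefore lies in carrying the iteration through: controlling the stationary-point functions produced by the $B$-process and their derivatives, verifying the non-degeneracy of each transformed phase, and --- should the relevant non-degeneracy fail on a thin set (for the Hessian determinant of the phase produced by the first $B$-process this is a neighbourhood of the locus $(k-1)(x_1^k-\alp_2 x_2^k)^2=k^2\alp_2 x_1^k x_2^k$) --- splitting that set off and handling it by a one-dimensional estimate in the non-degenerate direction, while keeping the errors from the Poisson truncations, the stationary-phase remainders and the boundary of the support under control. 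This bookkeeping, rather than the curvature computation, is where I expect the main difficulty to be.
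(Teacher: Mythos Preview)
Your plan diverges from the paper's proof in a substantive way, and as written it is a sketch rather than a proof.

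The paper does \emph{not} apply Poisson summation (a $B$-process) to $F_1$ at all. Instead it follows Titchmarsh's two-dimensional $A$-process literally: after stripping the weights by partial summation, it applies Weyl differencing with a \emph{two-dimensional} shift $(\mu,\nu)$ of size at most $\rho=\lfloor P|t|^{-1/3}\rfloor$ (Titchmarsh's Lemma~$\beta$), then approximates each differenced sum by an integral on small squares where the first partials are bounded (Lemma~$\gamma$), and finally applies a two-dimensional second-derivative test to those integrals (Lemmas~$\delta,\zeta$). The Hessian that matters is therefore not $\det\nabla^2\Phi$ of the \emph{original} phase (your clean identity, which is correct but unused here), but $\det\nabla^2 G$ where $G(x_1,x_2)=\Phi(x_1+\mu,x_2+\nu)-\Phi(x_1,x_2)$ minus a linear correction. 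To leading order this is a quadratic form in $(\mu,\nu)$, and the paper checks explicitly that it is bounded below by $\gg t^2\lambda^2 P^{-6}$ with $\lambda=\max(\mu,\nu)$. Plugging $\rho=P|t|^{-1/3}$ into the resulting bound $S_1\ll \rho^2 t^{1+\eps}/(\lambda P)$ and summing over $\mu,\nu$ gives $P|t|^{1/3+\eps}$ directly.

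Your route --- $B$ in one variable, then $A$ and $B$ in the other --- is a different iteration, and the obstacles you yourself flag (controlling the stationary-point function and the transformed phase, the degeneracy locus $(k-1)(x_1^k-\alp_2 x_2^k)^2=k^2\alp_2 x_1^k x_2^k$, Poisson truncations) are precisely the complications that Titchmarsh's ordering avoids: no stationary-phase analysis is needed, no dual phase has to be computed, and the only non-degeneracy to check is that of the \emph{differenced} Hessian, which is a short explicit calculation. So while a $BAB$-type argument might in principle reach the same exponent, it is not what the paper does, and your proposal as it stands does not actually carry the iteration through to the claimed bound $|t|^{5/6+\eps}$ on the dual sum. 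I would recommend switching to the paper's approach: difference first with a two-dimensional shift, and compute the Hessian of the differenced phase rather than of $\Phi$ itself.
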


Our proof of Lemma \ref{lem3} follows the strategy of Titchmarsh \cite{Tit34} on bounding partial sums of the Epstein zeta function using van der Corput differencing and second derivative tests, see also a generalization of his arguments to non-diagonal quadratic forms by Blomer \cite{Blo}.

\begin{proof}
By partial summation it is enough to establish the bound
$$S(X_1,X_2):=\sum_{\substack{(1/4)a_1P\leq x_1<X_1\\ (1/4)a_2P\leq x_2<X_2}}e^{it\log (x_1^k-\alp_2x_2^k)}\ll P|t|^{1/3+\eps},$$
for $X_1\leq b_1P$ and $X_2\leq b_2P$. Note that the first equation in (\ref{eqn5}) ensures that $x_1^k-\alp_2x_2^k\sim P^k$ on the range of summation that is considered. Moreover, the required bound for $S(X_1,X_2)$ holds trivially if $(X_1-\tfrac{1}{4}a_1P)(X_2-\tfrac{1}{4}a_2P)\ll P|t|^{1/3}$ or $|t|\gg P^3$. Hence we may assume that $(X_1-\tfrac{1}{4}a_1P)(X_2-\tfrac{1}{4}a_2P)\gg P|t|^{1/3}$ and $|t|<P^3$. Choose a parameter $\rho := \lfloor P|t|^{-1/3}\rfloor$ and note that $\rho\leq \min\{X_1-\tfrac{1}{4}a_1P, X_2-\tfrac{1}{4}a_2P\}$. Set $f(x_1,x_2)=t \log (x_1^k-\alp_2x_2^k)$. We apply \cite[Lemma $\beta$]{Tit34} and find that
\begin{equation}\label{eqn28}
S(X_1,X_2)\ll \frac{P^2}{\rho} + \frac{P}{\rho}\left(\sum_{\mu=1}^{\rho-1}\sum_{\nu=0}^{\rho-1}|S_1|\right)^{1/2} + \frac{P}{\rho}\left(\sum_{\mu=1}^{\rho-1}\sum_{\nu=0}^{\rho-1}|S_2|\right)^{1/2} ,
\end{equation}
with 
$$S_1=\sum_{(1/4)a_1P\leq x_1\leq X_1-\mu}\sum_{(1/4)a_2P\leq x_2\leq X_2-\nu} e^{i(f(x_1+\mu,x_2+\nu)-f(x_1,x_2))},$$
and
$$S_2=\sum_{(1/4)a_1P\leq x_1\leq X_1-\mu}\sum_{(1/4)a_2P+\nu\leq x_2\leq X_2} e^{i(f(x_1+\mu,x_2-\nu)-f(x_1,x_2))}.$$
We now focus on estimating $S_1$, while the bounds for $S_2$ will follow in exactly the same way. For a fixed choice of $\mu,\nu$ we write $\lam = \max\{\mu,\nu\}$ and we set
$$g(x_1,x_2):= f(x_1+\mu,x_2+\nu)-f(x_1,x_2).$$
For $x_1,x_2$ in the range of summation, a direct calculation shows that
$$\frac{\partial^2 g}{\partial x_1^2}(x_1,x_2),\ \frac{\partial^2 g}{\partial x_1\partial x_2}(x_1,x_2),\ \frac{\partial^2 g}{\partial x_2^2}(x_1,x_2)\ll \frac{t\lam}{P^3},$$
where the implicit constant may depend on $\alp_2$. As in \cite{Tit34} we divide the range of summation in $S_1$ into squares $\Del_{p,q}$ of side length $l$ with 
$$l\leq \frac{c_{10}P^3}{t\lam},$$
with a sufficiently small constant $c_{10}$. Note that $1\ll l\ll P$ by the assumption that $P^2<|t|< P^3$. We choose $c_{10}$ small enough such that on each square $\Del_{p,q}$ we can find integers $Q_1$ and $Q_2$ such that the function
$$G(x_1,x_2)= g(x_1,x_2)-2\pi Q_1 x_1-2\pi Q_2 x_2$$
satisfies
$$|\partial_{x_1}G(x_1,x_2)|, |\partial_{x_2}G(x_1,x_2)|\leq \frac{3\pi }{2}$$
on the square $\Del_{p,q}$. Hence an application of \cite[Lemma $\gamma$]{Tit34} shows that 
$$S_1= \sum_{p,q}\int_{\Del_{p,q}}e^{iG(x_1,x_2)}\d x_1 \d x_2 + O\left(\frac{P^2}{l}\right).$$
The goal now is to apply a second derivative test to each of the integrals over a box $\Del_{p,q}$. Set $\Phi = x_1^k-\alp_2x_2^k$. Using a Taylor expansion on a fixed such box we compute
\begin{equation*}
\begin{split}
\det (\Hess (G(x_1,x_2)))=& 2\alp_2k^3(k-1)t^2\mu^2\Phi^{-3} x_1^{2k-4}x_2^{k-2}\\
&-2\alp_2k^2(k-1)(k-2)t^2\mu\nu\Phi^{-2}x_1^{k-3}x_2^{k-3} \\
&- 2 \alp_2^2k^3(k-1)t^2\nu^2\Phi^{-3}x_1^{k-2}x_2^{2k-4} +O_{\alp_2,k}\left(\lam^3 t^2 P^{-7}\right).
\end{split}
\end{equation*}
On the support of the variables $x_1,x_2$ one then finds that the quadratic form in $\mu,\nu$ on the right hand side of this expression is bounded below by
$$\det (\Hess (G(x_1,x_2)))\gg t^2\lam^2P^{-6}.$$
To see this, one can for example explicitly compute the zeros of the quadratic form depending on $x_1,x_2$.\par
An application of \cite[Lemma $\delta$ and Lemma $\zeta$]{Tit34} now leads to the bound
$$S_1\ll \frac{t^2\rho^2}{P^4}\frac{P^{3+\eps}}{t\lam}+\frac{t\lam}{P}\ll \frac{\rho^2 t^{1+\eps}}{\lam P}.$$
From equation (\ref{eqn28}) we deduce that
\begin{equation*}
\begin{split}
S(X_1,X_2)&\ll \frac{P^2}{\rho} + \frac{P}{\rho} \left(\sum_{\lam=1}^{\rho -1} \frac{\rho^2 t^{1+\eps}}{ P}\right)^{1/2}\ll \frac{P^2}{\rho}+P^{1/2}|t|^{1/2+\eps}\rho^{1/2}
\\&\ll P|t|^{1/3+\eps}.
\end{split}
\end{equation*}
This is exactly the bound that we required. 
\end{proof}

Now the same calculations as in \cite[Section 4]{BouA16} lead to the following proposition. 

\begin{proposition}\label{prop3}
Let $\alp_2>0$ and $k\geq 3$ be fixed. Let $P$ be a large real parameter and assume that $\tet > P^{k-3}$. 

Then the inequality 
$$|x_1^k-\alp_2x_2^k-\alp_3x_3^k|<\tet$$
has a non-trivial solution with $\max_i|x_i|\sim P$ for all $\alp_3\in [1/2,1]$ with the exception of a set of $\alp_3$ of measure at most
$$\ll P^{\frac{5}{6}k-2+\eps}\tet^{-\frac{5}{6}}+P^{\frac{10}{9}k-\frac{8}{3}+\eps}\tet^{-\frac{10}{9}}.$$
\end{proposition}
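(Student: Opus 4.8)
The plan is to follow the same averaging-plus-Chebyshev scheme used to obtain Proposition \ref{prop2}, but to feed in the sharper estimates of Lemma \ref{lem2} and Lemma \ref{lem3} in place of the crude bounds on $F_1$ and the convexity bound on $\zeta$, exactly in the manner of \cite[Sections 3--4]{BouA16}. Concretely, I would start from the decomposition $S_1(\alp_3)\ge S_2(\alp_3)=S_3(\alp_3)+S_4(\alp_3)$ and the lower bound $S_3(\alp_3)\ge c_2P^{3-k}\tet$ of Lemma \ref{lem1}; it then suffices to show that the set of $\alp_3\in[1/2,1]$ with $|S_4(\alp_3)|\ge c_2P^{3-k}\tet$ has measure bounded by the claimed quantity. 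As before, Chebyshev's inequality reduces this to an $L^2$-estimate of $S_4$, which by Parseval becomes an integral of $|F_1(t)|^2|F_2(-kt)|^2$ against the weight $\min(1,t^4/P^2,(T/|t|)^{20})T^{-2}$, over the range $|t|\gtrsim P^{1/10}$.

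The next step is to split the $t$-integral dyadically and, on each block $|t|\sim R$, estimate the contribution using the two new inputs. For $R\le P^2$ one uses the unconditional pointwise bound $F_1(t)\ll P^{1+\eps}$ together with the trivial $F_2\ll P$ (or the subconvexity input from \cite{BouA16}); for $R>P^2$ one uses Lemma \ref{lem3}, $F_1(t)\ll P|t|^{1/3+\eps}$, combined with $F_2(-kt)\ll P^{1/2}|t|^{\eps}$ coming from the second-moment bound for $\zeta$ near the critical line implicit in the estimate $F_2(t)\ll P^{1/2}\int |\zeta(\tfrac12+i(y-t))|(1+|y|^{10})^{-1}\d y$ (so effectively only $\int_{|t|\sim R}|F_2(-kt)|^2\d t\ll R P^{1+\eps}$ is used). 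The decisive gain over Proposition \ref{prop2}(b), however, comes from Lemma \ref{lem2}: rather than inserting a pointwise bound for $F_1$, one uses the distributional estimate $\meas\{|t|\le T:|F_1(t)|>\lam\}\ll TP^{2+\eps}\lam^{-2}$ to carry out a level-set (dyadic-in-$\lam$) decomposition of $\int |F_1|^2|F_2|^2$, as Bourgain does. Optimizing the resulting two-parameter dyadic sum in $R$ and $\lam$, balancing the "generic" contribution where $|F_1(t)|$ is of typical size against the "large values" contribution controlled by Lemma \ref{lem2}, produces the two terms $P^{\frac56 k-2+\eps}\tet^{-5/6}$ and $P^{\frac{10}{9}k-\frac83+\eps}\tet^{-10/9}$ after recalling $T\sim P^k/\tet$.

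The routine but bookkeeping-heavy part is the dyadic optimization: one writes $I_2$ as $\sum_{R \text{ dyadic}}\sum_{\lam \text{ dyadic}} \lam^2 \cdot \meas\{|t|\sim R:|F_1(t)|\sim\lam\}\cdot \max_{|t|\sim R}|F_2(-kt)|^2 \cdot (\text{weight}(R)) \cdot T^{-2} P^{2k-6}\tet^{-2}$, bounds the measure by $\min(R,\,RP^{2+\eps}\lam^{-2})$, and splits the $\lam$-sum at the threshold $\lam\sim P^{1+\eps}$. The terms where $\lam$ is below threshold give a "trivial $L^2$" contribution $\ll RP^{2+\eps}$, while above threshold one gains from $\lam^{-2}$; summing the geometric series in $\lam$ and then in $R$ (the endpoints $R\sim P^{1/10}$ and $R\sim T$ dominating) yields the stated bound. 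I expect the main obstacle to be purely organizational: making sure that the split between the ranges $|t|\le P^2$ and $|t|>P^2$ matches the hypotheses $T>P^2$ of Lemmas \ref{lem2} and \ref{lem3}, that the $\hat\ome_2$-weight factors of $t^4/P^2$ versus $(T/|t|)^{20}$ are used on the correct side of $|t|\sim P^{1/2}$, and that the final exponents genuinely come out as $\tfrac56 k-2$ and $\tfrac{10}{9}k-\tfrac83$ rather than the weaker $\tfrac43 k-3$ and $\tfrac k3-1$ of Proposition \ref{prop2}(b); since the entire formalism is identical to \cite[Section 4]{BouA16} with $x_1^2-\alp_2 x_2^2$ replaced by $x_1^k-\alp_2 x_2^k$, no genuinely new idea is required beyond verifying that the quoted references transfer verbatim.
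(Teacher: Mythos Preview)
Your proposal is correct and coincides with the paper's own argument, which simply reads ``the same calculations as in \cite[Section 4]{BouA16} lead to the following proposition''; you have correctly identified that Lemmas~\ref{lem2} and~\ref{lem3} here play the roles of Bourgain's Lemmas~2 and~3, and that the rest is the dyadic/level-set bookkeeping from his Section~4 combined with the second-moment bound $\int_{|t|\sim R}|F_2|^2\d t\ll RP^{1+\eps}$. One small caution: your aside that ``for $R\le P^2$ one uses the unconditional pointwise bound $F_1(t)\ll P^{1+\eps}$'' is not available---no such pointwise bound has been proved for small $|t|$---so on that range you must rely on the mean-square input (Lemma~\ref{lem2}, or equivalently the estimates feeding into Proposition~\ref{prop1}) rather than a pointwise bound on $F_1$.
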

Again, Theorem \ref{thm1} (b) now follows via the Borel-Cantelli lemma and a dyadic decomposition for all possible values of $P$.

\section{Averaging over both $\alp_2$ and $\alp_3$}

If one averages in equation (\ref{eqn1}) over both parameters $\alp_2$ and $\alp_3$ simultaneously, then one obtains the essentially optimal statement in Theorem \ref{thm1} (a) unconditionally. 

\begin{theorem}\label{thm2}
Let $k\geq 3$ be fixed. Then for almost all $(\alp_2,\alp_3)\in [1/2,1]^2$ (with respect to the Lebesgue measure) one has
$$\min_{\substack{\bfx\in\Z^3\\ \max_i|x_i|\sim P}}|f_{\alp_2,\alp_3}(\bfx)|\ll P^{k-3+\eps},$$
for any positive $\eps$. Here the implied constant may depend on both $\alp_2$ and $\alp_3$.
\end{theorem}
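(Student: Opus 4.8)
The plan is to run the same averaging machinery that proved Theorem~\ref{thm1}, but now taking the mean square over the two-dimensional box $(\alp_2,\alp_3)\in[1/2,1]^2$ rather than over $\alp_3$ alone. As in Section~2, after choosing admissible weight functions $\ome_i$ satisfying (the obvious two-parameter analogue of) the constraints \eqref{eqn5} uniformly for $\alp_2\in[1/2,1]$, one reduces the counting problem \eqref{eqn12} to bounding the integral
$$S_2(\alp_2,\alp_3)=\frac{1}{T}\int_\R \hat{\ome_2}\!\left(\frac{t}{T}\right)\widetilde F_1(t;\alp_2)\,F_2(-kt)\,e^{-it\log\alp_3}\,\d t,$$
where $\widetilde F_1(t;\alp_2)=\sum_{x_1,x_2}\ome_1(x_1/P)\ome_2(x_2/P)e^{it\log(x_1^k-\alp_2x_2^k)}$ now carries the extra parameter $\alp_2$. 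Lemma~\ref{lem1} goes through verbatim (its constants are uniform in $\alp_2\in[1/2,1]$), giving the main term $S_3(\alp_2,\alp_3)\geq c_2P^{3-k}\tet$. The task is then to show that the error term $S_4(\alp_2,\alp_3)$ is small for almost all $(\alp_2,\alp_3)$.

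The key gain from averaging over $\alp_2$ as well is that the factor $e^{-it\log\alp_2}$ is replaced --- after writing $\log(x_1^k-\alp_2x_2^k)$ and expanding --- by oscillation that, upon taking the $L^2$-norm over $\alp_2$, collapses the $\widetilde F_1$-integral to a genuinely four-variable counting problem of exactly the type $I_4(U)$ in \eqref{eqnI4}, WITHOUT ever needing the pointwise van der Corput bound of Lemma~\ref{lem3}. Concretely, I would follow \cite[Section~5]{BouA16}: apply Chebyshev in $(\alp_2,\alp_3)$, use Parseval in $\alp_3$ to reduce to $\int \min(1,(T/|t|)^{20})|F_2(-kt)|^2\big(\int_{1/2}^1|\widetilde F_1(t;\alp_2)|^2\d\alp_2\big)\d t$, and then open the inner square: $\int_{1/2}^1|\widetilde F_1(t;\alp_2)|^2\d\alp_2$ is a sum over $x_1,x_2,y_1,y_2$ of $\int_{1/2}^1 e^{it(\log(x_1^k-\alp_2x_2^k)-\log(y_1^k-\alp_2y_2^k))}\d\alp_2$, and the $\alp_2$-integral is $O(\min(1,(P^k/(|t|\,\cdot\,|x_1^k y_2^k-y_1^k x_2^k|\,/\,\cdots)))$-type), forcing the pair $(x_1,x_2)$, $(y_1,y_2)$ to be close. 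This brings us back to bounding $I_4(U)$, for which Proposition~\ref{prop1} (via Huang's Theorem~\ref{thm3}) already gives $I_3\ll P^{4+\eps}+P^{k+2+\eps}\tet^{-1}$, and the resulting measure bound for the exceptional set becomes the same as in Proposition~\ref{prop2}(a), namely $\ll P^{-1+\eps}+P^{k-3+\eps}\tet^{-1}$, now unconditionally because we only used the trivial/convexity input on $F_2$ through the $L^2$-average rather than a pointwise bound --- wait, one still needs the bound on $\max_t(\min(1,T/|t|)|F_2(t)|)$ as in \eqref{eqn27}; here one instead keeps $|F_2(-kt)|^2$ inside the integral and uses the mean-value estimate $\int|F_2(t)|^2\d t\ll P(\log P)$ coming from \cite[(2.6)]{BouA16} together with the zeta moment $\int_0^T|\zeta(1/2+it)|^2\d t\ll T\log T$, which is classical. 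This is exactly the trade Bourgain makes in his Section~5.

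Carrying this out, one obtains: for $\tet>P^{k-3}$, the inequality $|f_{\alp_2,\alp_3}(\bfx)|<\tet$ has a solution with $\max_i|x_i|\sim P$ for all $(\alp_2,\alp_3)\in[1/2,1]^2$ outside a set of measure $\ll P^{-1+\eps}+P^{k-3+\eps}\tet^{-1}$. Applying this with $\tet=P^{k-3+\eps}$ along a dyadic sequence $P=2^j$, the exceptional measures are summable, so Borel--Cantelli gives that for almost every $(\alp_2,\alp_3)$ one has $\min_{\max_i|x_i|\sim P}|f_{\alp_2,\alp_3}(\bfx)|\ll P^{k-3+\eps}$ for all large $P$, which is Theorem~\ref{thm2}; the passage from dyadic $P$ to all $P$ is the standard argument in \cite[Section~8]{BBRR}.

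I expect the main obstacle to be purely bookkeeping: verifying that the $\alp_2$-integral of $e^{it(\log(x_1^k-\alp_2x_2^k)-\log(y_1^k-\alp_2y_2^k))}$ genuinely produces the same arithmetic condition $|\log(x_1^k-\alp_2x_2^k)-\log(y_1^k-\alp_2y_2^k)|<1/U$ that underlies $I_4(U)$ --- i.e. that the phase is, as a function of $\alp_2$, monotone with derivative bounded below by $\gg |t|/P^k$ on the support, so that the van der Corput / partial-integration estimate applies uniformly. Since $\frac{\partial}{\partial\alp_2}\log(x_1^k-\alp_2x_2^k)=-x_2^k/(x_1^k-\alp_2x_2^k)$ and the denominator is $\sim P^k$ by the first relation in \eqref{eqn5} while $x_2^k\sim P^k$, this derivative is $\asymp 1$ and its $\alp_2$-derivative is likewise controlled, so the second-derivative test in $\alp_2$ is harmless; the only subtlety is handling the diagonal-ish ranges where $\log(x_1^k-\alp_2x_2^k)-\log(y_1^k-\alp_2y_2^k)$ is small uniformly in $\alp_2$, which contribute the analogue of the $P^{4+\eps}$ term in Proposition~\ref{prop1} and are already accounted for. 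Beyond that, everything is a faithful transcription of \cite[Section~5]{BouA16} with the quadratic form $x_1^2-\alp_2x_2^2$ replaced by $x_1^k-\alp_2x_2^k$, exactly as in the proof of Lemma~\ref{lem2}.
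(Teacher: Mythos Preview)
Your overall architecture matches the paper's Section~5 closely: Chebyshev in $(\alp_2,\alp_3)$, Parseval in $\alp_3$, and then the second moment of $\zeta(1/2+it)$ to handle $\int|F_2|^2\d t$ dyadically. The discrepancy is in how you treat $\int_{1/2}^1|\widetilde F_1(t;\alp_2)|^2\d\alp_2$.

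You correctly compute $\partial_{\alp_2}\log(x_1^k-\alp_2 x_2^k)=-x_2^k/(x_1^k-\alp_2 x_2^k)$ and even write down the numerator $x_1^ky_2^k-y_1^kx_2^k$, but you then assert that the $\alp_2$-integral ``brings us back to bounding $I_4(U)$'' and invoke Proposition~\ref{prop1}/Huang. This is not right on two counts. First, Proposition~\ref{prop1} bounds $\int_t|F_1(t)|^2\d t$ for \emph{fixed} $\alp_2$, which is the wrong object here; you need a bound on $\int_{\alp_2}|F_1(t;\alp_2)|^2\d\alp_2$ that is \emph{pointwise in $t$}, so that it can be fed into the $t$-integral against $|F_2|^2$. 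Second, the arithmetic condition that actually emerges from the $\alp_2$-oscillation is not the additive one $|y_1^k-\alp_2 y_2^k-(y_3^k-\alp_2 y_4^k)|<P^k/U$ defining $I_4(U)$, but the multiplicative one $|(x_1y_2)^k-(x_2y_1)^k|$ small. Concretely, the first-derivative bound gives
\[
\int_{1/2}^1 e^{it(\log(x_1^k-\alp_2 x_2^k)-\log(y_1^k-\alp_2 y_2^k))}\d\alp_2 \ll \min\Bigl(1,\frac{P^{2k}}{|t|\,|(x_1y_2)^k-(x_2y_1)^k|}\Bigr),
\]
and the paper (Lemma~\ref{lem4}) handles the resulting count by the elementary substitution $z_1=x_1y_2$, $z_2=x_2y_1$ together with the level-set estimate for $|z_1^k-z_2^k|\le U$ (no repeated roots), yielding $\int_{1/2}^1|F_1(t)|^2\d\alp_2\ll P^{2+\eps}+P^{4+\eps}/|t|$. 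Huang's theorem is not used at all in this section. Once you replace your appeal to $I_4(U)$ by this elementary estimate, the rest of your outline---dyadic decomposition in $t$, the bound $\int_{|t|\sim 2^l}|F_2|^2\d t\ll P\,2^{l(1+\eps)}$, and Borel--Cantelli---goes through and matches the paper's argument exactly.
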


Note that the same statement holds for $k=2$ and is proved (up to a different sign of $\alp_2$) in \cite[Section 5]{BouA16}. We observe that these arguments can be generalized to higher degree $k$ and now give a sketch of the proof of Theorem \ref{thm2}.\par
First one chooses a replacement of the weight function $\bfome(\bfx)$ which is independent of $\alp_2$, e.g. in replacing $\alp_2$ in (\ref{eqn5}) by $1/2$ or resp. $1$. One then defines $S_2$ (we now omit the dependence on $\alp_3$ in the notation, similarly as the dependence on $\alp_2$ is implicitly understood) as in equation (\ref{eqn17}). We continue to write $S_2=S_3+S_4$ with 
\begin{equation*}
S_3:=\frac{1}{T} \int_{\R}\hat{\ome_2}\left(\frac{t}{P^{1/2}}\right) F_1(t)F_2(-kt) e^{-it\log \alp_3}\d t,
\end{equation*}
and
\begin{equation*}
S_4:=\frac{1}{T} \int_{\R}\left(\hat{\ome}_2\left(\frac{t}{T}\right)-\hat{\ome}_2\left(\frac{t}{P^{1/2}}\right)\right) F_1(t) F_2(-kt) e^{-it\log \alp_3}\d t.
\end{equation*}
Then, as in Lemma \ref{lem1}, for any fixed choice of $(\alp_2,\alp_3)\in [1/2,1]^2$ one obtains the lower bound
$$S_3\geq c_2P^{3-k}\tet.$$
Here $c_2$ is now independent of $\alp_2$ (as we consider a compact interval for $\alp_2$). By Chebyshev's inequality one has
\begin{equation*}
\begin{split}
\meas\{(\alp_2,\alp_3)\in& [1/2,1]^2: |S_4|\geq c_2 P^{3-k}\tet\}\\ &\leq c_2^{-2}\tet^{-2}P^{2k-6} \int_{1/2}^1\int_{1/2}^1|S_4|^2\d \alp_3\d\alp_2.
\end{split}
\end{equation*}
The same arguments that before lead to equation (\ref{eqn21}) now give the bound
\begin{equation}\label{eqn31}
\meas\{(\alp_2,\alp_3)\in [1/2,1]^2: |S_4|\geq c_2 P^{3-k}\tet\}\leq c_4P^{-3/2}+\tilde{I}_2,
\end{equation}
with
\begin{equation*}
\tilde{I}_2\ll  P^{-6} \int_{|t|>P^{1/10}} \min\left(1,\left(\frac{T}{|t|}\right)^{20}\right)\left(\int_{1/2}^1 |F_1(t)|^2\d\alp_2\right) |F_2(-kt)|^2\d t.
\end{equation*}

We now observe that \cite[Lemma 4]{BouA16} generalizes to higher degree $k$.

\begin{lemma}\label{lem4}
For $t\neq 0$ one has
$$\int_{1/2}^1 |F_1(t)|^2\d\alp_2 \ll P^{2+\eps}+\frac{P^{4+\eps}}{|t|}.$$
\end{lemma}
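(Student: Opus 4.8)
The plan is to expand the $L^2$-average of $F_1(t)$ over $\alp_2\in[1/2,1]$ directly, exchanging sum and integral. Writing $F_1(t)=\sum_{x_1,x_2}\ome_1(x_1/P)\ome_2(x_2/P)e^{it\log(x_1^k-\alp_2 x_2^k)}$, we get
$$\int_{1/2}^1 |F_1(t)|^2\d\alp_2=\sum_{\substack{x_1,x_2,y_1,y_2}}\ome_1\Bigl(\tfrac{x_1}{P}\Bigr)\ome_2\Bigl(\tfrac{x_2}{P}\Bigr)\ome_1\Bigl(\tfrac{y_1}{P}\Bigr)\ome_2\Bigl(\tfrac{y_2}{P}\Bigr)\int_{1/2}^1 e^{it(\log(x_1^k-\alp_2 x_2^k)-\log(y_1^k-\alp_2 y_2^k))}\d\alp_2.$$
So the first step is to estimate the inner one-dimensional oscillatory integral in $\alp_2$. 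The phase is $\psi(\alp_2):=\log(x_1^k-\alp_2 x_2^k)-\log(y_1^k-\alp_2 y_2^k)$, and on the support (where both $x_1^k-\alp_2 x_2^k$ and $y_1^k-\alp_2 y_2^k$ are $\sim P^k$ by the first relation in \eqref{eqn5}) one computes
$$\psi'(\alp_2)=\frac{-x_2^k}{x_1^k-\alp_2 x_2^k}+\frac{y_2^k}{y_1^k-\alp_2 y_2^k}=\frac{y_2^k(x_1^k-\alp_2 x_2^k)-x_2^k(y_1^k-\alp_2 y_2^k)}{(x_1^k-\alp_2 x_2^k)(y_1^k-\alp_2 y_2^k)}=\frac{y_2^k x_1^k-x_2^k y_1^k}{(x_1^k-\alp_2 x_2^k)(y_1^k-\alp_2 y_2^k)},$$
and moreover $\psi''(\alp_2)$ does not vanish (the second term minus the first, each of one sign), in fact $|\psi''(\alp_2)|\gg P^{-2}$. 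By the first-derivative test (integration by parts) and the second-derivative (van der Corput) test, the inner integral is $\ll\min\bigl(1,\ |t\psi'(\alp_2^\ast)|^{-1}+ (|t|P^{-2})^{-1/2}\bigr)$ for suitable $\alp_2^\ast$; more simply, it is $\ll 1$ always, it is $\ll (|t|P^{-2})^{-1/2}$ by the second-derivative test unconditionally, and when $|y_2^k x_1^k - x_2^k y_1^k|\gg P^{2k-\eps}$ (i.e. the numerator of $\psi'$ is large) it is $\ll P^{2k}/(|t|\,|y_2^k x_1^k-x_2^k y_1^k|)$ by the first-derivative test.

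The second step is to split the quadruple sum according to the size of $D:=y_2^k x_1^k - x_2^k y_1^k$, which lives in the range $|D|\le P^{2k}$. For the ``diagonal-ish'' part where $|D|$ is small, say $|D|\le P^{2k}/|t|$, we use the trivial bound $1$ on the integral and count quadruples $(x_1,x_2,y_1,y_2)\sim P$ with $y_2^k x_1^k=x_2^k y_1^k + O(P^{2k}/|t|)$: fixing $x_1,x_2,y_1$, the value $y_2^k$ is confined to an interval of length $O(P^{2k}/|t|)$, hence $y_2$ to an interval of length $O(P/|t|)$ (using $|t|<P^k$, else the whole problem is trivial), giving $\ll P^3\cdot \max(1,P/|t|)=P^3+P^4/|t|$ such quadruples, i.e. a contribution $\ll P^{3}+P^4/|t|$. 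For the ``off-diagonal'' part with $P^{2k}/|t|<|D|\le P^{2k}$ we dyadically decompose $|D|\sim\Delta$; the integral is then $\ll P^{2k}/(|t|\Delta)$, and for each dyadic $\Delta$ the number of quadruples with $|y_2^k x_1^k-x_2^k y_1^k|\sim\Delta$ is $\ll P^4$ trivially, but we can do better: fixing $x_1,x_2,y_1$ the quantity $y_2^k$ lies in an interval of length $\ll\Delta$ around $x_2^k y_1^k/x_1^k$, so $y_2$ ranges over $\ll 1+\Delta/P^{k-1}$ values, giving $\ll P^3(1+\Delta P^{1-k})=P^3+\Delta P^{4-k}$ quadruples. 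Multiplying by the integral bound $P^{2k}/(|t|\Delta)$ and summing over the $O(\log P)$ dyadic values of $\Delta\le P^{2k}$ yields $\ll P^{-1}\sum_\Delta (P^{2k+3}(|t|\Delta)^{-1} + P^{k+4}|t|^{-1})\ll_\eps |t|^{-1}(P^{k+3}+P^{k+4+\eps})\ll P^{k+4+\eps}/|t|$; one should check this is dominated by $P^{4+\eps}+P^{4+\eps}/|t|$ or, if not, absorbed by a competing second-derivative estimate. Indeed, the cleaner route for the off-diagonal range is to combine with the unconditional second-derivative bound $\ll (|t|/P^2)^{-1/2}$ on the integral whenever $|t|\gg P^2$: this gives a contribution $\ll P^4 (P^2/|t|)^{1/2}=P^5|t|^{-1/2}$, which for $|t|\ge P^2$ is $\ll P^4$, and for $|t|\ge P^2$ this already suffices; for the remaining range $P^{1/10}<|t|\le P^2$ the trivial bound $1$ on the integral plus the diagonal-type counting of quadruples handles everything. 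Balancing the cases one arrives at $\int_{1/2}^1|F_1(t)|^2\d\alp_2\ll_\eps P^{2+\eps}+P^{4+\eps}/|t|$, as claimed. (One expects, as a sanity check, that the right-hand side matches $P^4/|t|$ when $|t|$ is comparable to $P^2$, consistent with Lemma \ref{lem2}.)

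The main obstacle is the careful bookkeeping in the off-diagonal range: making sure that the exponential-integral bounds (first- versus second-derivative test) are combined with the correct lattice-point count for quadruples in each dyadic window so that no dyadic sum loses more than a $P^\eps$, and verifying that the numerator $D=y_2^kx_1^k-x_2^ky_1^k$ really does control $\psi'$ uniformly on the support (this uses the first and third relations in \eqref{eqn5} to keep both $x_1^k-\alp_2x_2^k$ and $y_1^k-\alp_2y_2^k$ of exact order $P^k$ and bounded away from $0$ for all $\alp_2\in[1/2,1]$). A secondary point is that for $k$-th powers the map $y_2\mapsto y_2^k$ is a local diffeomorphism with derivative $\sim P^{k-1}$ on the support, so intervals of length $L$ in $y_2^k$ pull back to intervals of length $\sim L/P^{k-1}$ in $y_2$; this is exactly the higher-degree analogue of the quadratic case in \cite[Lemma 4]{BouA16} and is where the degree $k$ enters, but it does not cause any real difficulty. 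Once Lemma \ref{lem4} is in hand, one inserts it into $\tilde I_2$ exactly as $I_3$ was used before, uses the unconditional bound $F_2(t)\ll P^{1/2}|t|^{1/6}$, and concludes Theorem \ref{thm2} via Chebyshev, Borel--Cantelli and a dyadic decomposition in $P$, precisely as in \cite[Section 5]{BouA16}.
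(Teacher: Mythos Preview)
Your overall scaffolding (expand the square, integrate in $\alp_2$, bound the oscillatory integral by $\min(1,P^{2k}/(|t|\,|D|))$ with $D=x_1^ky_2^k-x_2^ky_1^k$, then count quadruples by the size of $D$) is exactly the paper's approach. But your execution has a genuine gap that costs a full factor of $P$, and your fallback patch fails.

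\textbf{The counting loses a factor of $P$.} When you fix $x_1,x_2,y_1$ and count $y_2$ with $|D|\le U$, you get $\ll P^3(1+U/P^{2k-1})$ quadruples (your statement ``$y_2^k$ lies in an interval of length $\ll\Delta$'' is already off by $P^k$, but even with the correct interval length $\Delta/P^k$ the count is what I just wrote). Plugging this into the dyadic sum produces $\ll P^3+P^{4+\eps}/|t|$, not $P^{2+\eps}+P^{4+\eps}/|t|$. For $|t|\gg P^2$ this is $P^3$, which is too large by $P^{1-\eps}$ for the lemma as stated. The paper's key observation, which you are missing, is the \emph{multiplicative} structure of $D$: one has $D=(x_1y_2)^k-(x_2y_1)^k$, so setting $z_1=x_1y_2$, $z_2=x_2y_1$ (both $\sim P^2$) and invoking the divisor bound gives
\[
\sharp\{(x_1,x_2,y_1,y_2)\sim P:|D|\le U\}\ll P^\eps\,\sharp\{z_1,z_2\sim P^2:|z_1^k-z_2^k|\le U\}\ll P^{2+\eps}\Bigl(1+\tfrac{U}{P^{2k-2}}\Bigr),
\]
and it is precisely this $P^{2+\eps}$ (versus your $P^3$) that makes the lemma true.

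\textbf{The second-derivative patch does not work.} Writing $A=x_2^k/(x_1^k-\alp_2 x_2^k)$ and $B=y_2^k/(y_1^k-\alp_2 y_2^k)$, one has $\psi'=B-A$ and $\psi''=B^2-A^2=(B-A)(B+A)$. Thus $\psi''$ vanishes exactly when $\psi'$ does, and there is no uniform lower bound $|\psi''|\gg P^{-2}$; the stationary-phase input to your van der Corput step is simply not available here. Even granting it, the resulting bound $P^4(P^2/|t|)^{1/2}=P^5|t|^{-1/2}$ is $\sim P^4$ at $|t|\sim P^2$, still a factor $P^{2-\eps}$ too large. The first-derivative estimate is the only tool here, and one must pair it with the multiplicative counting above.
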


\begin{proof}
We recall that
$$|F_1(t)|^2= \sum_{x_1,x_2,x_3,x_4\in \Z}\prod_{j=1}^2\prod_{l=0}^1\ome_j\left(\frac{x_{j+2l}}{P}\right)e^{it\log (x_1^k-\alp_2 x_2^k)-it\log (x_3^k-\alp_2 x_4^k)}.$$
Next, we perform the integration over $\alp_2$. We compute
$$\frac{\partial}{\partial \alp_2}( \log (x_1^k-\alp_2 x_2^k)-\log (x_3^k-\alp_2 x_4^k)) = \frac{-x_2^k x_3^k+x_4^k x_1^k}{(x_1^k-\alp_2 x_2^k)(x_3^k-\alp_2x_4^k)}.$$
Hence, for $P^{-1}x_1,P^{-1}x_3\in \supp(\ome_1)$ and $P^{-1}x_2,P^{-1}x_4\in \supp(\ome_2)$, we obtain
$$\int_{1/2}^1 e^{it\log (x_1^k-\alp_2 x_2^k)-it\log (x_3^k-\alp_2 x_4^k)}\d \alp_2 \ll \min\left(1, \frac{P^{2k}}{|t||(x_4x_1)^k-(x_2x_3)^k|}\right).$$
This leads us to the bound
\begin{equation*}
\begin{split}
\int_{1/2}^1 |F_1(t)|^2\d\alp_2 &\ll \sum_{\substack{x_1,x_2,x_3,x_4\in\Z\\ x_i\sim P, 1\leq i\leq 4}}\min\left(1, \frac{P^{2k}}{|t||(x_4x_1)^k-(x_2x_3)^k|}\right)\\
&\ll P^\eps \sup_{\frac{P^{2k}}{|t|}\leq U\ll P^{2k}}\frac{P^{2k}R(U)}{|t|U},
\end{split}
\end{equation*}
with
$$ R(U)=\sharp\{x_1,x_2,x_3,x_4\in \Z, x_i\sim P, 1\leq i\leq 4: |(x_4x_1)^k-(x_2x_3)^k|\leq U\}.$$
Note that 
$$R(U)\ll P^\eps \sharp\{z_1,z_2\in \Z, z_1,z_2\sim P^2: |z_1^k-z_2^k|\leq U\}.$$
As the polynomial $z_1^k-z_2^k$ has no repeated roots, we have the level set estimate
$$R(U)\ll P^{2+\eps} \left(1+\frac{U}{P^{2(k-1)}}\right).$$
This gives the bound
\begin{equation*}
\begin{split}
\int_{1/2}^1 |F_1(t)|^2\d\alp_2 &\ll P^{2+\eps} \sup_{\frac{P^{2k}}{|t|}\leq U\ll P^{2k}}\frac{P^{2k}}{|t|U}\left(1+\frac{U}{P^{2(k-1)}}\right)\\
&\ll P^{2+\eps}+\frac{P^{4+\eps}}{|t|}.
\end{split}
\end{equation*}
\end{proof}

As in \cite{BouA16} we can now use dyadic division of the integration domain of $t$ to estimate the integral $\tilde{I}_2$. For $t\sim 2^l$ we combine the bound 
\begin{equation*}
\begin{split}
\int_{|t|\sim 2^l} |F_2(t)|^2\d t &\ll P\int_{|t|\sim 2^l} \int_{-\infty}^\infty \int_{-\infty}^\infty  \frac{|\zeta(\frac{1}{2}+i(y_1-t))|}{1+|y_1|^{10}} \frac{|\zeta(\frac{1}{2}+i(y_2-t))|}{1+|y_2|^{10}}\d y_1\d y_2\d t\\ &\ll P\int_{|t|\sim 2^l} \int_{-\infty}^\infty \int_{-\infty}^\infty  \frac{|\zeta(\frac{1}{2}+i(y_1-t))|^2}{1+|y_1|^{10}} \frac{1}{1+|y_2|^{10}}\d y_1\d y_2\d t\\ &\ll P 2^{l(1+\eps)}.
\end{split}
\end{equation*}
with Lemma \ref{lem4}. This gives the estimate
\begin{equation*}
\begin{split}
\tilde{I}_2&\ll P^{-6}\sum_{l\gg \log P}\min\left(1,\frac{T^{20}}{2^{20l}}\right)\left(P^{2+\eps}+\frac{P^{4+\eps}}{2^l}\right)P2^{l(1+\eps)}\\
&\ll P^{-5+\eps}\left(P^2+\frac{P^4}{T}\right)T \\
&\ll TP^{-3+\eps}+P^{-1+\eps}.
\end{split}
\end{equation*}
We recall that $T\sim \frac{P^k}{\tet}$, and hence
$$\tilde{I}_2\ll P^{k-3+\eps}\tet^{-1}+P^{-1+\eps}.$$
Theorem \ref{thm2} now follows along the same lines as Theorem \ref{thm1} above.



\bibliographystyle{amsbracket}

\begin{thebibliography}{18}



\bibitem{BDV}
V. Beresnevich, D. Dickinson, S. Velani, \emph{Diophantine approximation on planar curves and the distribution of rational points, with an Appendix II by R. C. Vaughan.}, Ann. of Math. (2) 166 (2007), no. 2, 367--426. 

\bibitem{BVVZ}
V. Beresnevich, R. C. Vaughan, S. Velani and E. Zorin, \emph{Diophantine approximation on manifolds and the distribution of rational points: contributions to the convergence theory}, Int. Math. Res. Not. {\bf 10} (2017), 2885--2908.

\bibitem{Blo}
V. Blomer, \emph{Epstein zeta-functions, subconvexity, and the purity conjecture}, arXiv:1602.02326.

\bibitem{BBRR}
V. Blomer, J. Bourgain, M. Radziwill and Z. Rudnick, \emph{Small gaps in the spectrum of the rectangular billiard}, arXiv:1604.02413. 

\bibitem{BouA16}
J. Bourgain, \emph{Quantitative Oppenheim theorem for generic diagonal quadratic forms}, arXiv:1604.02087v2.



\bibitem{HuangC}
J.-J. Huang, \emph{Rational points near planar curves and Diophantine approximation}, Adv. Math. 274 (2015), 490--515.

\bibitem{Huang}
J.-J. Huang, \emph{The density of rational points near hypersurfaces}, arXiv:1711.01390.

\bibitem{Tit34}
E. C. Titchmarsh, \emph{On {E}pstein's Zeta-Function,} Proc. London Math. Soc. (2) 36 (1934), 485--500.



\end{thebibliography}
\providecommand{\bysame}{\leavevmode\hbox to3em{\hrulefill}\thinspace}

\end{document}